\newtheorem{theorem}{Theorem}[section]
 \newtheorem{lemma}[theorem]{Lemma}
\newtheorem{remark}[theorem]{Remark}
\numberwithin{equation}{section}
\newtheorem{discu}{Discussion:}
\newtheorem{conje}{Conjecture:}
\begin{document}
\date{}

\title{Domination Parameters in Hypertrees and Sibling trees}

\author{\begin{tabular}{rcl}
         			Indra Rajasingh${^1}$, R. Jayagopal$^{2,}$\thanks{Corresponding Author: R. Jayagopal} ~and R. Sundara Rajan$^{3,}$\thanks{This work is supported by National Board of Higher Mathematics (NBHM), No. 2/48(4)/2016/NBHM-R$\&$D-II/11580, Department of Atomic Energy (DAE), Government of India.}
				\end{tabular}\\
        \begin{tabular}{c}
 					$^{1,2}$School of Advanced Sciences, Vellore Institute of Technology,\\ Chennai - 600 127, India\\
					$^{3}$Department of Mathematics, Hindustan Institute of Technology and Science,\\
Chennai - 603 103, India\\
				  $^{1}$indrarajasingh@yahoo.com, $^{2}$jgopal89@gmail.com and $^{3}$vprsundar@gmail.com \\
        \end{tabular}}

\maketitle

\begin{abstract}

A locating-dominating set (LDS) of a graph $G$ is a dominating set $S$ of $G$ such that for every two vertices $u$ and $v$ in $V(G) \setminus S$, $N(u)\cap S \neq N(v)\cap S$. The locating-domination number $\gamma^{L}(G)$ is the minimum cardinality of a LDS of $G$. Further if $S$ is a total dominating set then $S$ is called a locating-total dominating set. In this paper we determine the domination, total domination, locating-domination and locating-total domination numbers for hypertrees and sibling trees.\\

{\bf Keywords :} Dominating set; total dominating set; locating-dominating set; locating-total dominating set; hypertree; sibling tree.

\end{abstract}
%
%
\section{Introduction}

A set $S$ of vertices in a graph $G$ is called a dominating set of $G$ if every vertex in $V(G) \setminus S$ is adjacent to some vertex in $S$.
The set $S$ is said to be a total dominating set of $G$ if every vertex in $V(G)$ is adjacent to some vertex in $S$.
The minimum cardinalities of a dominating set and a total dominating set of $G$ are denoted as $\gamma(G)$ and $\gamma_t(G)$, respectively.
Domination arises in facility location problems, where the number of facilities such as hospitals or fire stations are fixed and one attempts to minimize the distance that a person needs to travel to get to the closest facility.

Total domination plays a role in the problem of placing monitoring devices in a system in such a way that every site in the system, including the monitors, is adjacent to a monitor site so that, if a monitor goes down, then an adjacent monitor can still protect the system. Installing minimum number of expensive sensors in the system which will transmit a signal at the detection of faults and uniquely determining the location of the faults motivate the concept of locating-dominating sets and locating-total dominating sets \cite{How}.

In a parallel computer, the processors and interconnection networks are modeled by the graph $G=(V,E)$, where each processor is associated with a vertex of $G$ and a direct communication link between two processors is indicated by the existence of an edge between the associated vertices. Suppose we have limited resources such as disks, input-output connections, or software modules, and we want to place a minimum number of these resource units at the processors, so that every processor is adjacent to at least one resource unit, then finding such a placement involves constructing a minimum dominating set for the graph $G$.
Determining if an arbitrary graph has a dominating and locating-dominating sets of a given size are well-known $NP$-complete problems \cite{NP,Oli}. Occurrence of faulty nodes in a device is inevitable. So, to diagnose these faults we make use of locating-total domination set in this system. We place monitoring devices in a system in such a way that every site in the system (including the monitors) is adjacent to a monitor site.

A locating-dominating set $(LDS)$ in a connected graph $G = (V, E)$ is a dominating set $S$ of $G$ such that for every pair of vertices $u$ and $v$ in $V(G) \setminus S$, $N(u)\cap S \neq N(v)\cap S$. The minimum cardinality of a locating-dominating set of $G$ is called the locating-domination number $\gamma^{L}(G)$ {\rm \cite{How}}.
The locating-domination problem has been discussed for paths and cycles \cite{ld1,ld2}, infinite grids \cite{ld3}, circulant graphs \cite{ld4}, fault-tolerant graphs \cite{ld5} and so on.

A locating-total dominating set $(LTDS)$ in a connected graph $G = (V, E)$ is a total dominating set $S$ of $G$ such that for every pair of vertices $u$ and $v$ in $V(G) \setminus S$, $N(u)\cap S \neq N(v)\cap S$. The minimum cardinality of a locating total-dominating set of $G$ is called the locating-total domination number $\gamma^{L}_t(G)$ {\rm \cite{How}}.
The locating-total domination problem has been discussed for trees \cite{chena}, cubic graphs and grid graphs \cite{mic}, corona and composition of graphs \cite{b.n}, claw-free cubic  graphs \cite{m.a.c}, edge-critical graphs \cite{m.w} and so on.

Tree structures are expansible in a natural way, and even unbalanced trees still retain most of the properties that make the tree attractive.
Additional links, however, are required to reduce the average distance between nodes and to provide a more uniform message density in all links. An extensive search for the optimal placement of these additional links has shown the half-ring binary  trees such as hypertrees, sibling trees and christmas trees to be attractive contenders, primarily because of their simple routing algorithms.

In this paper, we determine the domination, total domination, locating-domination and locating-total domination numbers for hypertrees and sibling trees.
\begin{figure}[h!]
\centering
\includegraphics[scale=0.44]{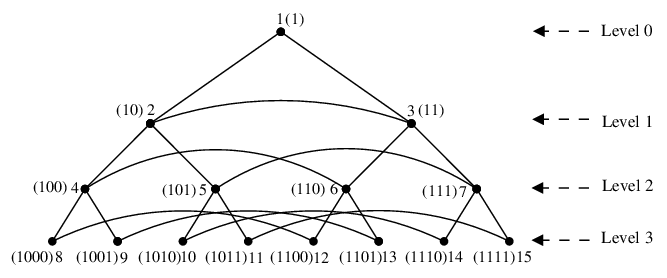}
\caption{$HT(3)$ with decimal labels and binary labels within braces}
\label{def}
\end{figure}
\section{Domination in hypertrees}
The basic skeleton of a hypertree is a complete binary tree $T_n$ of height $n$. Here the nodes of the tree are numbered as follows: The root node has label $1 $. The root is said to be at level $0$. Labels of left and right children are formed by appending a $0$ and $1$, respectively to the labels of the parent node. The decimal and binary labels of the hypertree are given in Figure {\rm \ref{def}}. Here the children of the node $x$ are labeled as $2x$ and $2x+1$. Additional links in a hypertree are horizontal and two nodes are joined in the same level $i$ of the tree if their label difference is $2^{i-1}$. We denote an $n$-level hypertree as $HT(n)$. It has $2^{n+1}-1$ vertices and $3(2^{n}-1)$ edges. Hypertree is a multiprocessor interconnection topology which has a frequent data exchange in algorithms such as sorting and Fast Fourier Transforms $(FFT's)$ \cite{goodman}.
The root-fault hypertree $HT^{*}(n)$, $n \geq 2$, is a graph obtained from $HT(n)$ by deleting the root vertex {\rm \cite{sundar}}. See Figure \ref{star}.
The following lemma is obvious from the definition of a hypertree.
\begin{lemma}
\label{lem}
The hypertree $HT(n)$, $n\geq3$, contains $2^{n-2}$ disjoint isomorphic copies of $HT^{*}(2)$ and $2^{n-3}$ disjoint isomorphic copies of $HT^{*}(3)$.
\end{lemma}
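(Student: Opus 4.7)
The plan is to exhibit each family of copies explicitly, using the horizontal edges in the bottom levels of $HT(n)$ as the skeleton on which the copies are built.

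For the first claim, I would fix any horizontal edge at level $n-1$, joining $u$ and $u+2^{n-2}$, and consider the six-vertex subgraph on $\{u,\,u+2^{n-2},\,2u,\,2u+1,\,2u+2^{n-1},\,2u+2^{n-1}+1\}$. The two parents contribute four tree edges to their children; the level-$n-1$ horizontal edge $u\sim u+2^{n-2}$ is present by hypothesis; and at level $n$ the horizontal links (which have separation $2^{n-1}$) yield exactly $2u\sim 2u+2^{n-1}$ and $2u+1\sim 2u+2^{n-1}+1$. The natural label map $u\mapsto 2$, $u+2^{n-2}\mapsto 3$, with the corresponding assignment on the children, then gives an isomorphism with $HT^{*}(2)$. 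Because the horizontal edges at level $n-1$ form a perfect matching of that level, and each level-$n$ vertex has a unique parent, distinct horizontal edges produce vertex-disjoint copies. The number of such edges is $2^{n-2}$.

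The second claim I would prove by exactly the same construction, one level higher. For each horizontal edge $w\sim w+2^{n-3}$ at level $n-2$, take the subgraph spanned by $\{w,w+2^{n-3}\}$ together with all of their descendants at levels $n-1$ and $n$. This yields $2+4+8=14$ vertices; the tree edges under the two roots are all present; and a direct check of label differences shows that the level-$n-1$ horizontal edges (separation $2^{n-2}$) and the level-$n$ horizontal edges (separation $2^{n-1}$) lying inside this set are precisely those prescribed by $HT^{*}(3)$. The natural label map then identifies the resulting subgraph with $HT^{*}(3)$, and disjointness again follows because level-$n-2$ horizontal edges form a matching and descendants in the binary tree are unique. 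The number of horizontal edges at level $n-2$ is $2^{n-3}$, giving the required count.

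The only real obstacle is the bookkeeping of horizontal edges. At each relevant level $i\in\{n-2,n-1,n\}$ one must verify that label differences inside the proposed subgraph equal $2^{i-1}$ exactly where $HT^{*}(k)$ prescribes a horizontal edge, and nowhere else. This is immediate from the fact that the labelling rule $x\mapsto 2x,\,2x+1$ doubles all label differences when descending one level, so the horizontal offsets at successive levels scale consistently. Once this translation is recorded, the isomorphisms and the disjointness are automatic, and the two counts $2^{n-2}$ and $2^{n-3}$ are simply the numbers of horizontal edges at levels $n-1$ and $n-2$ of $HT(n)$.
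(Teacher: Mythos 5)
Your construction is correct and is exactly the decomposition the paper has in mind (it states the lemma without proof, and later uses precisely these copies of $HT^{*}(2)$ and $HT^{*}(3)$ sitting on the horizontal matchings at levels $n-1$ and $n-2$ together with their descendants). Your write-up simply supplies the label-difference bookkeeping that the paper declares obvious, so there is nothing to correct.
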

\begin{figure}[h!]
\vspace{-0.3cm}
\centering
\includegraphics[scale=0.45]{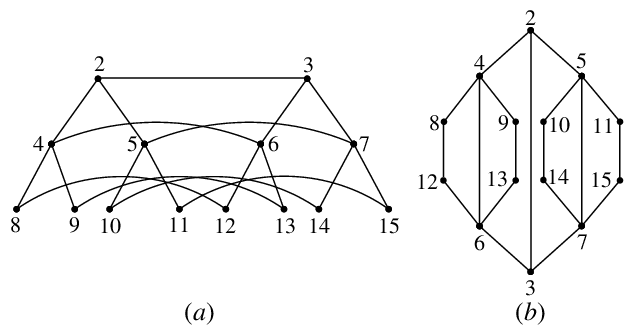}
\caption{ (a) $HT_{}^{*}(3)$ by definition~and~(b) $HT_{}^{*}(3)$ redrawn}
\label{star}
\vspace{-0.2cm}
\end{figure}
\begin{lemma}
\label{d_td}
Let $G$ be the root-fault hypertree $HT^{*}(2)$. Then $\gamma(G) =\gamma_t(G) = 2$.
\end{lemma}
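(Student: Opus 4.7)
The plan is to pin down $\gamma(G)$ and $\gamma_t(G)$ by sandwiching them between matching lower and upper bounds of $2$, using the small size of $HT^*(2)$ directly.

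First I would fix concrete notation for $G=HT^*(2)$. Using the labelling from Figure~\ref{def}, the vertex set is $\{2,3,4,5,6,7\}$ and the edges are the tree edges $\{2,4\},\{2,5\},\{3,6\},\{3,7\}$ together with the horizontal edges $\{2,3\}$ (at level $1$, label difference $2^{0}=1$) and $\{4,6\},\{5,7\}$ (at level $2$, label difference $2^{1}=2$). So $|V(G)|=6$, and a quick degree check shows that the two level-$1$ vertices $2$ and $3$ have degree $3$ while the four level-$2$ vertices have degree $2$; in particular $\Delta(G)=3$.

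For the lower bound, note that since $\gamma(G)\le\gamma_t(G)$ for any graph without isolated vertices, it suffices to prove $\gamma(G)\ge 2$. But a single vertex $v$ dominates only its closed neighborhood $N[v]$, which has size at most $\Delta(G)+1=4<6=|V(G)|$, so no singleton can dominate $G$. Hence $\gamma(G)\ge 2$.

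For the upper bound, I would exhibit $S=\{2,3\}$ explicitly. Vertex $2$ is adjacent to $3,4,5$ and vertex $3$ is adjacent to $2,6,7$, so together they cover every vertex of $G$; thus $S$ is a dominating set and $\gamma(G)\le 2$. Moreover, since $2$ and $3$ are adjacent in $G$, every vertex of $S$ itself has a neighbour in $S$, so $S$ is in fact a total dominating set, giving $\gamma_t(G)\le 2$. Combining with the lower bound yields $\gamma(G)=\gamma_t(G)=2$.

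The argument is entirely structural and uses no clever combinatorics, so there is no real obstacle; the only mild subtlety is being careful that the horizontal edges at each level are included when listing $E(G)$, and reading the (level-dependent) label-difference rule correctly from the definition of $HT(n)$.
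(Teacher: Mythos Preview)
Your proof is correct and follows essentially the same approach as the paper: both establish the lower bound $\gamma(G)\ge 2$ via the maximum-degree argument ($\Delta(G)=3<5$), exhibit the two degree-$3$ vertices as a dominating set for the upper bound, and observe that their adjacency makes this set totally dominating. The only difference is cosmetic---you use the explicit decimal labels $2,3,\dots,7$ whereas the paper names the vertices $v,v',a,b,a',b'$ with reference to Figure~\ref{abc}(a).
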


\begin{proof}
Let $S$ be a dominating set of $G$.
We claim that $\left|S\right| \geq 2$.
Suppose not, let $\left|S\right| = 1$.
Then there exists a vertex $u$ in $S$ such that deg($u$) = $5$, a contradiction, since $\Delta(G) = 3$.
Hence $\left|S\right|\geq 2$.
Let $S=\{ v, v' \}$ where deg($v$) = deg($v'$) = $3$. See Figure \ref{abc}(a).
Now, $N[v] \cup N[v'] = V(G)$ and hence $\left|S\right| \leq  2$. Since $v$ and $v'$ are adjacent in $G$, $S$ is also a minimum total dominating set of $G$. Therefore $\gamma(G) =\gamma_t(G) = 2$.
\end{proof}

\begin{figure}[h!]
\centering
\includegraphics[scale=0.45]{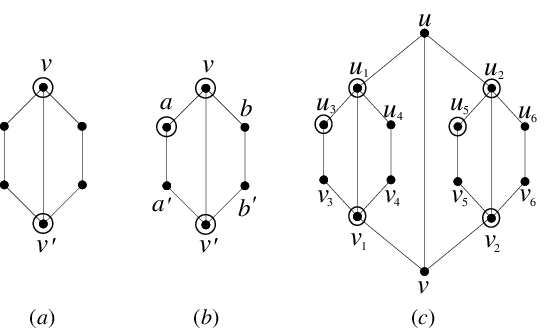}
\caption{Circled vertices constitute $(a)$ a minimum dominating set of $HT^{*}(2)$,~
(b)~a minimum locating-dominating set of $HT^{*}(2)$~and~(c)~a minimum locating-dominating set of $HT^{*}(3)$}
\label{abc}
\end{figure}

\begin{lemma}\label{3}
\label{ld_ltd_3}
Let $G$ be the root-fault hypertree $HT^{*}(2)$. Then $\gamma^{L}(G) =\gamma^{L}_t(G) = 3$.
\end{lemma}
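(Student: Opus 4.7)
The plan is to establish $\gamma^L(G)\geq 3$ by a short counting argument and $\gamma_t^L(G)\leq 3$ by exhibiting an explicit witness set; the inequality $\gamma^L(G)\leq \gamma_t^L(G)$ then closes the chain $3 \leq \gamma^L(G)\leq \gamma_t^L(G)\leq 3$.

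For the lower bound, I would observe that in any LDS $S$, every $u\in V(G)\setminus S$ has a nonempty trace $N(u)\cap S$ (by domination) and these traces must be pairwise distinct (by the locating condition). Hence $|V(G)\setminus S|\leq 2^{|S|}-1$. Since $|V(G)|=6$, assuming $|S|=2$ would force $4\leq 3$, which is impossible. Therefore $|S|\geq 3$, giving $\gamma^L(G)\geq 3$ and \emph{a fortiori} $\gamma_t^L(G)\geq 3$.

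For the upper bound, I would exhibit an explicit $S$ of size $3$ that is simultaneously a total dominating set and a locating set. Using the labels inherited from Figure~\ref{def} and restricted to $HT^*(2)$, a natural candidate (compare Figure~\ref{abc}(b)) is $S=\{2,3,4\}$. The verification is a direct two-part check: (i) total domination, since $2$ and $3$ are adjacent in $G$, while $5$ is adjacent to $2$, and both $6$ and $7$ are adjacent to $3$; (ii) location, since the three vertices of $V(G)\setminus S=\{5,6,7\}$ have traces $N(5)\cap S=\{2\}$, $N(6)\cap S=\{3,4\}$, and $N(7)\cap S=\{3\}$, which are pairwise distinct.

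The only genuine obstacle is choosing the correct $3$-element witness: the minimum dominating set $\{2,3\}$ from Lemma~\ref{d_td} fails the locating condition because the pendant-like vertices $4,5$ share the trace $\{2\}$ and $6,7$ share the trace $\{3\}$; any candidate LTDS of size $3$ must therefore break one of these collisions, and adding the asymmetric vertex $4$ does exactly this while preserving total domination.
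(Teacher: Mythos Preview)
Your proof is correct and, for the upper bound, coincides with the paper's: both exhibit $S=\{v,v',a\}=\{2,3,4\}$ and verify it is a locating-total dominating set (for completeness you should also note that $4\in S$ is adjacent to $2\in S$, so that total domination is checked at every vertex of $S$, not just at $2$ and $3$). The lower bound, however, is obtained differently. The paper argues by ad hoc case analysis on $2$-element subsets: $\{v,v'\}$ fails because the two pendant-like vertices below $v$ share the trace $\{v\}$, while a set such as $\{a,b'\}$ fails because $v$ and $a'$ share the trace $\{a\}$. You instead invoke the general counting bound $|V\setminus S|\le 2^{|S|}-1$, which rules out $|S|=2$ immediately since $4>3$. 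Your argument is cleaner and sidesteps the enumeration of cases (which the paper does not carry out exhaustively anyway), at the minor cost of appealing to a fact that is standard but not stated elsewhere in the paper.
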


\begin{proof}
Let $S$ be a locating-dominating set of $G$.
We claim that $\left|S\right| \geq 3$.
By Lemma \ref{d_td}, $\gamma^{L}(G) \geq 2$.
Assume that $\left|S\right| = 2$.
Let $S=\{ v, v' \}$ where deg($v$) = deg($v'$) = $3$. Then $N(v)=\{ a,b,v' \} $ and $N(v')=\{ a',b',v \} $. See Figure \ref{abc}(b). This implies $N(a) \cap S = \{v\} = N(b) \cap S$.
Suppose $S=\{a,b'\}$ then $N(v) \cap S = \{a\} =N(a') \cap S$.
Thus $\left|S\right| \geq 3$. Now let $S=\{v,v',a\}$. Then $N(a') \cap S=\{v',a\}, N(b) \cap S=\{v\}, N(b') \cap S=\{v'\}$ and $N[S]=V(G)$. Hence $\gamma^{L}(G) \leq 3$. Since vertices $v,v'$ and $a$ induce a path on 3 vertices in $G$, $S$ is also a minimum locating-total dominating set of $G$. Therefore  $\gamma^{L}(G) =\gamma^{L}_t(G) = 3$.
\end{proof}
\begin{lemma}
\label{last_level_d}
Let $G$ be the hypertree $HT(n), n \geq 1$.
Then any minimum dominating set of $G$ contains at least $2_{}^{n-1}$ vertices from levels $n-1$ and $n$.
\end{lemma}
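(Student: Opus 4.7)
The plan is to argue by counting how many level-$n$ leaves each vertex of a dominating set can dominate, and then observe that only vertices from levels $n-1$ and $n$ can touch a leaf at all.

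First I would describe the neighborhood of a leaf. A leaf $\ell$ at level $n$ has exactly two neighbors in $HT(n)$: its parent at level $n-1$ (the tree edge) and its unique horizontal partner at level $n$ (the additional link, since at level $n$ the horizontal edges match labels differing by $2^{n-1}$). Consequently, $N[\ell]$ is contained in the union of levels $n-1$ and $n$. Thus if $S$ is any dominating set of $HT(n)$, the only members of $S$ that can contribute to dominating a level-$n$ leaf lie in levels $n-1$ or $n$.

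Next I would bound, for each such contributor, the number of leaves it can dominate. A vertex at level $n-1$ is adjacent to exactly two leaves, namely its two children (its other neighbors are its parent at level $n-2$ and its level-$(n-1)$ horizontal partner, neither of which is a leaf), so it dominates at most $2$ leaves. A vertex at level $n$ dominates at most $2$ leaves, namely itself and its horizontal partner. Letting $a=|S\cap L_{n-1}|$ and $b=|S\cap L_n|$, the set of level-$n$ leaves that $S$ can possibly dominate therefore has size at most $2a+2b$.

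Finally, since $S$ is a dominating set it must dominate all $2^n$ leaves at level $n$, so
\[
2(a+b)\ \geq\ 2^n,
\]
which gives $a+b\geq 2^{n-1}$, proving the claim. Note that the argument uses only that $S$ is a dominating set, not that it is minimum, which makes the hypothesis even weaker than needed; no real obstacle arises, the only care is in verifying that a level-$(n-1)$ vertex indeed has exactly two leaf-neighbors (its children) rather than three or more, which follows from the fact that its horizontal partner sits at level $n-1$, not $n$.
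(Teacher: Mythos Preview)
Your proof is correct. The underlying idea coincides with the paper's: every vertex that can reach a level-$n$ leaf covers at most two of them, so at least $2^{n}/2=2^{n-1}$ such vertices are needed. The packaging differs. The paper partitions the bottom two levels into $2^{n-2}$ vertex-disjoint copies of $HT^{*}(2)$ and invokes Lemma~\ref{d_td} to conclude that each copy must contain at least two vertices of $S$, whereas you do a single global double count over all $2^{n}$ leaves. Your version is slightly more elementary in that it avoids the $HT^{*}(2)$ decomposition and Lemma~\ref{d_td}; the paper's version, on the other hand, sets up exactly the block structure that is reused in the induction steps of Theorem~\ref{dom} and its analogues. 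Both arguments in fact apply to \emph{any} dominating set, not just a minimum one, as you noted.
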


\begin{proof}
Let $S$ be a minimum dominating set of $G$. Vertices in levels $n$ and $n-1$ of $G$ induce $2^{n-2}_{}$ copies of $H$, each isomorphic to $HT^{*}_{}(2)$. The worst case arises when both vertices of degree 3 in $H$ are already dominated by vertices from $G \setminus H$. By proof of Lemma $\ref{d_td}$, each copy contains at least 2 vertices of $H$. Hence $S$ contains at least $2(2^{n-2}_{})$ vertices from levels $n$ and $n-1$ in $G$.
\end{proof}

\begin{figure}[h!]
\centering
\includegraphics[scale=0.54]{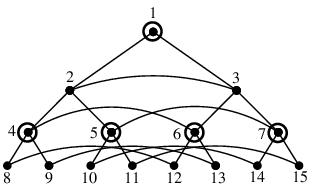}
\caption{Circled vertices constitute a minimum dominating set $S$ of {\it HT}($3$)}
\vspace{-0.5cm}
\label{td3(a) and d3(b)}
\end{figure}

\begin{theorem}
\label{dom}
Let $G$ be the hypertree $HT(n),n \geq 1$. Then \newline
$$\gamma(G) = \left\{
\begin{array}{c l}
    \frac{1}{7}(2^{n+2}+3) & if~~n\equiv0~(mod~3)\\\\
		\frac{1}{7}(2^{n+2}-1) & if~~n\equiv1~(mod~3)\\\\
		\frac{2}{7}(2^{n+1}-1) & if~~n\equiv2~(mod~3)
 \end{array}\right.$$
\label{dom}
\end{theorem}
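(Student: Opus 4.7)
My plan is to establish the recursive identity
\[
\gamma(HT(n)) \;=\; 2^{n-1} + \gamma(HT(n-3)) \qquad (n\ge 3),
\]
with auxiliary base values $\gamma(HT(0)) = \gamma(HT(1)) = 1$ and $\gamma(HT(2)) = 2$ (the last coming from Lemma \ref{d_td} after verifying $HT(2)$ needs two dominators). Unrolling this recurrence and summing the geometric progression according to the residue of $n$ modulo $3$ produces the three closed forms in the statement; for instance, when $n\equiv 0\pmod 3$ one obtains $\gamma(HT(n)) = 4(2^n-1)/7 + 1 = (2^{n+2}+3)/7$, and the cases $n\equiv 1$ and $n\equiv 2$ terminate at $\gamma(HT(1))$ and $\gamma(HT(2))$ respectively, producing the constants $-1$ and $-2$ in the numerator.

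For the upper bound, I would invoke Lemma \ref{lem} to decompose the bottom three levels of $HT(n)$ into $2^{n-3}$ induced copies of $HT^*(3)$. Inside each copy, the four middle-level vertices (those of degree $4$ within the copy) form a dominating set of that copy, as a direct check of their closed neighbourhoods confirms. Adjoining this $4\cdot 2^{n-3}=2^{n-1}$-vertex set to an inductively chosen minimum dominating set of the top portion, which is isomorphic to $HT(n-3)$, yields a dominating set of $HT(n)$ of size $2^{n-1}+\gamma(HT(n-3))$, establishing the $\le$ half of the recursion.

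For the lower bound, let $S$ be any minimum dominating set of $HT(n)$ and write $S=S_1\sqcup S_2$ with $S_1=S\cap(\text{levels }n-2,n-1,n)$ and $S_2=S\cap(\text{levels }0,\dots,n-3)$. Lemma \ref{last_level_d} gives $|S\cap(\text{levels }n-1,n)|\ge 2^{n-1}$, so in particular $|S_1|\ge 2^{n-1}$. Let $T$ be the set of level-$(n-3)$ vertices not dominated by $S_2$; each $v\in T$ must have some neighbour in $S_1$, and since $v$'s only neighbours on the $S_1$-side are its two level-$(n-2)$ children, at least one such child belongs to $S_1\cap(\text{level }n-2)$. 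Because distinct $v\in T$ have disjoint children, $|S_1\cap(\text{level }n-2)|\ge |T|$, and combining with the previous (disjoint) bound gives $|S_1|\ge 2^{n-1}+|T|$. Meanwhile, $S_2\cup T$ dominates the subgraph $HT(n-3)$ on the top $n-2$ levels, so $|S_2|+|T|\ge \gamma(HT(n-3))$. Adding the two inequalities cancels $|T|$ and delivers $|S|\ge 2^{n-1}+\gamma(HT(n-3))$.

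The principal obstacle is the $|T|$-bookkeeping in the lower bound: one has to see clearly why a level-$(n-3)$ vertex left uncovered by $S_2$ forces a \emph{brand new} vertex into $S_1\cap(\text{level }n-2)$ on top of the $2^{n-1}$ vertices that Lemma \ref{last_level_d} has already located in levels $n-1$ and $n$. Once the disjointness of these two level strata is noted, the argument collapses cleanly. Everything else is routine: verifying the neighbourhood calculations in the upper-bound construction and solving the linear recurrence in the three congruence cases to recover the stated constants $+3,-1,-2$.
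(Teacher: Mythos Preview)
Your approach is essentially the same as the paper's: both establish the recursion $\gamma(HT(n)) = 2^{n-1} + \gamma(HT(n-3))$ by placing dominators on all of level $n-1$ (the paper phrases this via the $2^{n-2}$ copies of $HT^*(2)$ and Lemma~\ref{d_td}, you via the $2^{n-3}$ copies of $HT^*(3)$, but the resulting set is identical) and recursing on the top $HT(n-3)$. Your lower-bound treatment is in fact more careful than the paper's: the paper simply asserts $\gamma(HT(3k+3)) \ge |S_1| + |S_2|$ with $S_1$ an \emph{independently chosen} minimum dominating set of $HT(3k)$, which tacitly assumes that $S\setminus S_2$ already dominates $HT(3k)$---precisely the issue your $T$-bookkeeping addresses by accounting for level-$(n-3)$ vertices that might be dominated only from level $n-2$.
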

\begin{proof}
We prove the result by induction on $n$.\\
{\bf Case $(i)$:  $n\equiv0~(mod~3)$}\\
Let $n= 3$ and let $S$ be a dominating set of $HT(3)$.
By Lemma $\ref{last_level_d}$, we need at least 4 vertices from levels 3 and 2 in $S$. To dominate the root vertex, we need at least one vertex from level 1 in $S$ or the root vertex itself has to be included in $S$. Therefore  $\left| S \right| \geq 5 = (1/7)(2^{3+2}+3)$.
				Now we will prove the equality.
				Let $S$ be the set of all vertices comprising of all vertices in level 2 and the root vertex of $HT(3)$. See Figure \ref{td3(a) and d3(b)}. Since all the vertices of level 3 and level 1 are adjacent to the vertices of level 2, $S$ is a dominating set of $HT(3)$.			 
				Therefore  $\left| S \right| \leq 5 = (1/7)(2^{3+2}+3)$.
				Assume that the result is true for $n=3k$, $k\geq1$.
That is, $\gamma(HT(3k))=(1/7)(2^{3k+2}+3).$ Consider $HT(3k+3)$. By Lemma \ref{lem}, there are $2^{3k+1}$ vertex disjoint copies of $HT^{*}(2)$ in $HT(3k+3)$.
Deletion of these subgraphs $HT^{*}(2)$ along with the vertices of $HT(3k+3)$ adjacent to vertices of these subgraphs results in $HT(3k)$.
Therefore by Lemma \ref{d_td},~$\gamma(HT(3k+3)) \leq \gamma(HT(3k))+2(2^{3k+1})$ and by induction hypothesis, $\gamma(HT(3k+3)) \leq (1/7)(2^{3k+2}+3)+2(2^{3k+1})=(1/7)(2^{(3k+3)+2}+3)$.
Now, let $S$ and $S_{1}^{}$ be the minimum dominating sets of $HT(3k+3)$ and $HT(3k)$, respectively.
					Let $S_{2}^{} \subset S$ be the vertex set which contains the vertices from the last three levels of $HT(3k+3)$.
				  Similar to the argument for $n=3$, any minimum dominating set contains at least $2^{3k+2}_{}$ vertices from levels $3k+2$ and $3k+3$.
		Therefore,~$\gamma(HT(3k+3)) \geq \left| S_{1}^{} \right| + \left| S_{2}^{} \right|
					= \big[ (1/7)(2^{3k+2}+3) \big] + \big[ 2^{3k+2} \big] = (1/7)(2^{(3k+3)+2}+3)$.\\
The case when $n\equiv 1,2~(mod~3)$ can be dealt with similarly.
\end{proof}		

\begin{figure}[h!]
\centering
\includegraphics[scale=0.41]{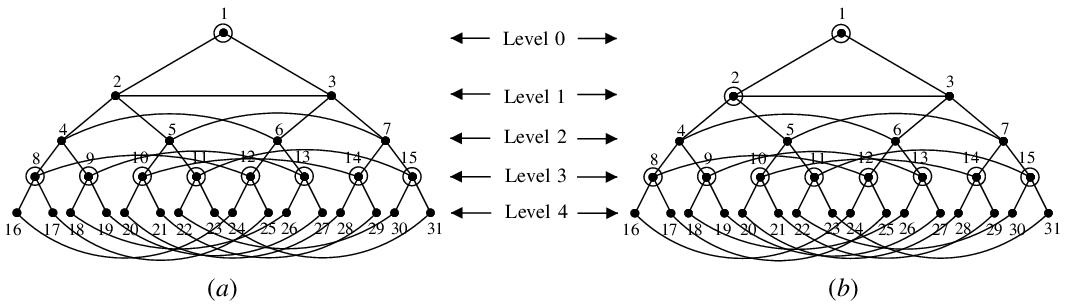}
\caption{Circled vertices constitute $(a)$ a minimum dominating set of {\it HT}($4$)~and~($b$)~a minimum total dominating set of {\it HT}($4$)}
\label{d4}
\end{figure}
\begin{remark}
The dominating sets described in Theorem $\ref{dom}$ for $HT(n)$, when $n\equiv0, 2~(mod~3)$ do not contain any isolated vertex.
Let $n = 3k+1$, $k \geq 0$. By Lemma $\ref{last_level_d}$, recursively every set of three levels from the bottom of $HT(n)$ must contain a minimum number of vertices from the last two levels to dominate all the three levels. Hence no minimum dominating set of $HT(n)$ contains any vertex from level $2$. So far, the vertices of level 0 and level 1 are not dominated. Hence any minimum dominating set contains at least one vertex either from level 0 or level 1.
See Figure \ref{d4}$(a)$.
Now, to make it as a total dominating set, any minimum total dominating set of $HT(n)$ must include one more vertex either from level $0$ or level $1$. See Figure \ref{d4}$(b)$.
These observations yield the following result.
\end{remark}

\begin{theorem}
Let $G$ be the hypertree $HT(n), n \geq 1$. Then \newline
\hspace{12cm} $$\gamma_t(G) = \left\{
\begin{array}{c l}
    \frac{1}{7}(2^{n+2}+3) & if~~n\equiv0~(mod~3)\\\\
		\frac{1}{7}(2^{n+2}-1)+1 & if~~n\equiv1~(mod~3)\\\\
		\frac{2}{7}(2^{n+1}-1) & if~~n\equiv2~(mod~3)
		\end{array}\right.$$
\end{theorem}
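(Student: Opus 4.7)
The plan is to reduce this theorem to Theorem \ref{dom} using the structural observations in the preceding remark. For $n \equiv 0, 2 \pmod{3}$, the remark notes that the minimum dominating sets constructed in the proof of Theorem \ref{dom} already have no isolated vertex, so each such set is automatically a total dominating set. This gives $\gamma_t(HT(n)) \le \gamma(HT(n))$; combined with the general bound $\gamma_t(H) \ge \gamma(H)$ valid for every graph $H$ without isolated vertices, we obtain equality with the stated formulas in these two residue classes.

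For $n \equiv 1 \pmod{3}$ the upper bound is immediate from the remark: a minimum dominating set $S$ from Theorem \ref{dom} contains a single vertex $r$ in levels $0$--$1$ that is isolated in $S$ (all its neighbours live in levels $0, 1, 2$, none of which meet $S \setminus \{r\}$). Picking any neighbour $u$ of $r$ and setting $S' = S \cup \{u\}$ eliminates the isolation without disturbing any other domination, so $S'$ is a total dominating set of size $\gamma(HT(n))+1 = \tfrac{1}{7}(2^{n+2}-1)+1$.

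The main obstacle is the matching lower bound $\gamma_t(HT(n)) \ge \gamma(HT(n))+1$ in this residue class. I would argue it by contradiction: suppose $T$ is a total dominating set of size $\gamma(HT(n))$; then $T$ is itself a minimum dominating set of $HT(n)$. Applying the recursive three-level argument of Theorem \ref{dom} together with Lemma \ref{last_level_d}, I would show by induction on $k$, where $n = 3k+1$, that any such $T$ is disjoint from level $2$ and meets levels $0$--$1$ in exactly one vertex $r$. Then every neighbour of $r$ sits in levels $0, 1, 2$ and hence outside $T$, so $r$ is isolated in $T$, contradicting totality. The delicate step is the inductive claim that no minimum dominating set of $HT(3k+1)$ uses a level-$2$ vertex; it can be established by using Lemma \ref{last_level_d} to pin down the exact vertex count of $T$ in each three-level slab from the bottom upward, leaving only one vertex for the topmost two levels and none to spare for level $2$, with base case $HT(1)$ verified directly.
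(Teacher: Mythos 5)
Your proposal follows essentially the same route as the paper, which derives this theorem from the remark preceding it: for $n\equiv 0,2\pmod 3$ the minimum dominating sets of Theorem \ref{dom} contain no isolated vertices and hence are already total, while for $n\equiv 1\pmod 3$ one extra vertex near the root is both sufficient and necessary because every minimum dominating set avoids level $2$ and so isolates its unique vertex in levels $0$--$1$. Your treatment of the lower bound in the $n\equiv 1$ case is in fact spelled out more carefully than the paper's remark, but the underlying argument (the three-level slab counting via Lemma \ref{last_level_d}) is the same.
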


\begin{lemma}
\label{ld_ltd_6}
Let $G$ be the root-fault hypertree $HT^{*}(3)$. Then $\gamma^{L}(G) =\gamma^{L}_t(G) = 6$.
\end{lemma}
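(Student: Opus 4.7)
The plan is to sandwich both parameters between $6$ and $6$: exhibit a single $6$-vertex set that is simultaneously an LDS and an LTDS, and then prove $\gamma^L(G)\geq 6$, which transfers to $\gamma^L_t(G)$ since every LTDS is an LDS.

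For the upper bound, I would use Lemma \ref{lem} to decompose $V(HT^*(3))$ into two vertex-disjoint copies $C_1,C_2$ of $HT^{*}(2)$ (occupying levels $2$ and $3$) together with the two level-$1$ vertices $\{2,3\}$. Inside each $C_i$ I would take exactly the $3$-vertex LDS constructed in the proof of Lemma \ref{ld_ltd_3}: the two degree-$3$ vertices of $C_i$ together with a neighbour of one of them. The union $S$ has $|S|=6$, induces two disjoint paths on $3$ vertices (so $S$ is already total), and a direct check on the $8$ vertices of $V\setminus S$ shows every code $N(u)\cap S$ is nonempty and pairwise distinct.

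For the lower bound, let $S$ be any LDS, set $S_i = S\cap C_i$ and $R = S\cap\{2,3\}$, so $|S|=|S_1|+|S_2|+|R|$. The four level-$3$ vertices in each $C_i$ have \emph{all} their neighbours inside $C_i$, so $S_i$ must dominate them and give them pairwise distinct codes; a ``one vertex covers at most two of the four inner leaves'' count then forces $|S_i|\geq 2$. I would next enumerate, up to the symmetry of each $C_i$, the two-element $S_i$'s that satisfy these internal constraints and record, for each surviving $S_i$, which element(s) of $\{2,3\}$ are thereby forced into $S$ (either to dominate a degree-$3$ vertex of $C_i$ left uncovered, or to separate it from its level-edge partner). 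This bookkeeping immediately delivers $|S|\geq 6$ whenever $|S_1|\geq 3$, $|S_2|\geq 3$, or $(S_1,S_2)$ forces both $2$ and $3$ into $R$.

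The main obstacle will be the two residual ``parallel'' sub-cases $(S_1,S_2)=(\{8,9\},\{10,11\})$ and $(\{12,13\},\{14,15\})$, where the enumeration only forces a single element of $\{2,3\}$, leaving the count at $|S|\geq 5$. I plan to rule these out by a direct collision: in the first sub-case, whatever $R$ is, vertices $6$ and $7$ both receive code $R\cap\{3\}$, because their only other potential $S$-neighbours lie in $\{4,12,13\}\cup\{5,14,15\}$, which is disjoint from $S_1\cup S_2$; hence $6$ and $7$ collide. Breaking the collision requires enlarging $S_1$ or $S_2$, contradicting $|S_i|=2$. The second sub-case is symmetric with $4$ and $5$ colliding. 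This rules out $|S|\leq 5$, giving $\gamma^L(G)=6$ and, since the set exhibited in the upper bound is also total, $\gamma^L_t(G)=6$ as well.
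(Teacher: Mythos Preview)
Your proposal is correct and follows essentially the same strategy as the paper: split $HT^{*}(3)$ into the two degree-$3$ vertices $\{2,3\}$ and the two $HT^{*}(2)$-components $G_1,G_2$ obtained by deleting them, then do a case analysis. The paper organises the lower bound by $|S\cap\{2,3\}|\in\{0,1,2\}$, invoking $\gamma^{L}(HT^{*}(2))=3$ when $|S\cap\{2,3\}|=0$ and the leaf-domination bound $|S\cap G_i|\ge 2$ when $|S\cap\{2,3\}|=2$, and dismisses the mixed case as ``similar''. You instead fix $|S_i|=2$, enumerate the four admissible pairs in each $C_i$, read off which of $2,3$ each one forces into $S$, and then explicitly kill the two surviving $5$-vertex candidates via the $6$--$7$ (resp.\ $4$--$5$) collision. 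This is the same decomposition and the same arithmetic, just traversed in the opposite order; your version has the virtue of actually carrying out the $|R|=1$ case that the paper leaves to the reader.
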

\begin{proof}

Let $S$ be a locating-dominating set of $G$. Assume that $\left|S\right| \leq 5$.
The vertices $u$ and $v$ are the only two vertices of degree 3 in $G$.
We assume that $u$ and $v$ do not belong to $S$. It is easy to see that the removal of $u$ and $v$ disconnects $G$ into two components $G_1$ and $G_2$ which are isomorphic to $HT^{*}(2)$. See Figure \ref{abc}(c). We need at least 3 vertices each to identify all the vertices in $G_{1}$ and $G_{2}$. This contradicts the cardinality of $S$.
Suppose $u$ and $v$ belongs to $S$, then we need at least 2 vertices in each of $G_{1}$ and $G_{2}$ to dominate $G_{1}$ and $G_{2}$.
This again contradicts the cardinality of $S$.
The case when either $u$ or $v$ belongs to $S$ is similar. Therefore $\gamma^{L}(G)\geq 6$.
Label the vertices of $G$ as in Figure \ref{abc}(c) and let
$S= \{ u_{1}, u_{2}, u_{3}, u_{5}, v_{1},  v_{2} \}$.
It is easy to check that $S$ is a locating-dominating set of $G$. Further there are no isolated vertices in the subgraph induced by $S$. Therefore $S$ is also a locating-total dominating set of $G$.
Hence $\gamma^{L}(G) =\gamma^{L}_t(G)=6.$
\end{proof}
\begin{remark}
Let $S$ be a dominating set of a graph $G$. A pair of vertices $u$ and $v$ of $V(G) \setminus S$ is said to be located by $S$ if $N(u)\cap S \neq N(v)\cap S$. We also say that $S$ locates $u$ and $v$. If $S$ is a locating-dominating set, then $S$ locates every pair of vertices in $V(G) \setminus S$.
\end{remark}

\begin{lemma}
\label{last_level_ld}
Let $G$ be the hypertree $HT(n), n \geq 1$.
Then any minimum locating-dominating set of $G$ contains at least $2_{}^{n-1}$ vertices of $G$ from level $n$.
\end{lemma}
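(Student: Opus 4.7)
The plan is to combine the structural decomposition of Lemma~\ref{lem} with a local locating analysis near the leaves. By Lemma~\ref{lem}, for $n\geq 3$ the vertices at levels $n-1$ and $n$ of $HT(n)$ partition into $2^{n-2}$ vertex-disjoint copies of $HT^{*}(2)$, each consisting of two horizontally adjacent parents $p,q$ at level $n-1$ and their four children at level $n$. Crucially, the level-$n$ horizontal partner of every leaf of such a copy $H$ also sits inside $H$, so every neighborhood $N(u)$ of a leaf $u$ is contained in $V(H)$, and the locating condition between leaves of $H$ must be witnessed entirely by $S\cap V(H)$.

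Fix one copy $H$ with sibling pairs $\{u,u'\}$ (children of $p$) and $\{v,v'\}=\{h_u,h_{u'}\}$ (children of $q$). Since $u$ and $u'$ share only the common neighbor $p$, the locating condition forces $\{h_u,h_{u'}\}\cap S\neq\emptyset$ whenever $u,u'\notin S$, and the same holds for $\{v,v'\}$; already this rules out the configuration of no leaves of $H$ in $S$ and gives the weak bound $2^{n-2}$. To reach $2^{n-1}$ I would show that in any \emph{minimum} LDS each copy contributes at least two leaves to $S$. Suppose only $u$ among the four leaves of $H$ lies in $S$. Then dominating $u'$ forces $p\in S$, and dominating $v'$ forces $q\in S$, so $|S\cap V(H)|\geq 3$. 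I would then produce an exchange that replaces $p$ in $S$ by the sibling leaf $u'$: after the swap both members of $\{u,u'\}$ lie in $S$; the pair $\{v,v'\}$ stays distinguished via $u\in N(v)\cap S$ and $u'\in N(v')\cap S$; and $p$ remains dominated through $u,u'$. Summed over the $2^{n-2}$ copies this yields $\geq 2\cdot 2^{n-2}=2^{n-1}$ level-$n$ vertices in $S$.

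The main obstacle is verifying that the exchange does not break a locating condition \emph{outside} $H$. Removing $p$ from $S$ alters the $S$-neighborhood of $p$'s grandparent at level $n-2$ and of $p$'s horizontal partner at level $n-1$, while the incoming vertex $u'$ is adjacent only to vertices inside $H$ and therefore cannot directly compensate. My plan is to argue that both affected vertices retain enough $S$-witnesses nearby: $q\in S$ still neighbors the horizontal partner of $p$, and the grandparent lives in a higher-level $HT^{*}(2)$ copy to which the same analysis applies recursively. The small cases $n=1$ and $n=2$, which fall outside the scope of Lemma~\ref{lem}, would be handled by direct inspection.
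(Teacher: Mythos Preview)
Your route diverges sharply from the paper's and contains a genuine logical gap. The paper does not decompose into $HT^{*}(2)$ blocks at all: it simply notes that the vertices of level $n$ induce a perfect matching of $2^{n-1}$ horizontal edges $H_1,\dots,H_k$ and asserts that for each such edge $xy$, if $\{x,y\}\cap S=\emptyset$ then $N(x)\cap S=N(y)\cap S$, so every matching edge must contribute a vertex to $S$. There is no exchange, no recursion, and no case analysis on the parents $p,q$.

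The fatal problem in your argument is the exchange step. You want to show that \emph{every} minimum locating-dominating set $S$ has at least two level-$n$ vertices in each block $H$, but the swap $S'=(S\setminus\{p\})\cup\{u'\}$ only manufactures a \emph{different} set of the same cardinality. Even if $S'$ is again an LDS, nothing contradicts the minimality of $S$, so you learn nothing about how many leaves $S$ itself contains; at best this yields ``there exists a minimum LDS with at least $2^{n-1}$ leaves'', which is strictly weaker than the lemma as stated. This is not a technicality: for $n=2$ the set $S=\{2,3,4\}$ is a minimum LDS of $HT(2)$ with only one vertex at level~$2$; your exchange converts it into $\{3,4,5\}$, another minimum LDS now having two leaves, yet the original $S$ still witnesses that the ``every'' claim fails. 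The external-locating obstacle you flag is therefore moot---the exchange paradigm cannot reach the conclusion regardless of whether the swap preserves the LDS property outside $H$. (The same example, incidentally, shows that the paper's one-line argument also warrants scrutiny: horizontal partners at level $n$ have \emph{different} parents, so $N(x)\cap S$ and $N(y)\cap S$ need not coincide when $x,y\notin S$.)
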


\begin{proof}
Let $S$ be a minimum locating-dominating set of $G$.
In $G$, the vertices of level $n$ induce a perfect matching consisting of $k=2^{n-1}_{}$ copies of complete graphs, $K_2$, say $H_{1}^{}, H_{2}^{}, \ldots , H_{k}^{}$.
Suppose $xy$ be an edge in $H_{1}^{}$ and $\{x, y\} \cap S = \emptyset $, then $N(x) \cap S = N(y) \cap S$.
Therefore $S$ contains at least one vertex from each of $V(H_{1}^{}), V(H_{2}^{}), \ldots , V(H_{k}^{})$.
\end{proof}

\begin{figure}[h!]
\centering
\includegraphics[scale=0.44]{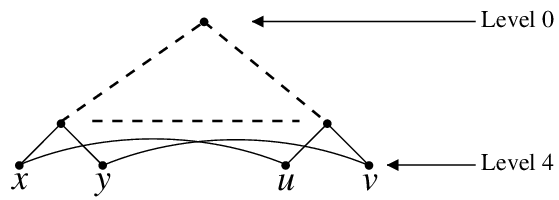}
\caption{Illustrates the proof of Case $(i)$ in Theorem \ref{ht ld}}
\vspace{-0.5cm}
\label{xyuv}
\end{figure}

\begin{theorem}
Let $G$ be the hypertree $HT(n),n \geq 1$. Then
 $$\gamma^{L}(G) = \left\{
\begin{array}{c l}
    \frac{1}{5}(2^{n+2}+1) & if~~n\equiv0~(mod~4)\\\\
		\frac{1}{5}(2^{n+2}+2) & if~~n\equiv1~(mod~4)\\\\
		\frac{1}{5}(2^{n+2}-1) & if~~n\equiv2~(mod~4)\\\\
		\frac{1}{5}(2^{n+2}-2) & if~~n\equiv3~(mod~4)
 \end{array}\right.$$
\label{ht ld}
\end{theorem}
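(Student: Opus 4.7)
The plan is to imitate the inductive scheme of Theorem~\ref{dom}, but with step~$4$ (since the claimed formula is class-preserving modulo~$4$) and with Lemma~\ref{ld_ltd_6} playing the role that Lemma~\ref{d_td} played for plain domination. First I would verify the four base cases $n=1,2,3,4$ directly, exhibiting a locating-dominating set that meets the formula; the matching lower bounds for these come from Lemma~\ref{ld_ltd_3}, Lemma~\ref{ld_ltd_6} and Lemma~\ref{last_level_ld} applied to the $HT^{*}(2)$ or $HT^{*}(3)$ copies supplied by Lemma~\ref{lem}, together with a short hand-count for the top of the tree.

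For the inductive step $HT(n)\to HT(n+4)$, the upper bound is constructive: take a minimum LDS $S_{0}$ of $HT(n)$ granted by the induction hypothesis and overlay on each of the $2^{n+1}$ disjoint copies of $HT^{*}(3)$ forming the bottom three levels of $HT(n+4)$ (by Lemma~\ref{lem}) the six-vertex pattern from the proof of Lemma~\ref{ld_ltd_6}. A careful placement of that pattern inside each copy ensures that the unique level-$(n+1)$ parent of the copy is dominated by a selected vertex at level $n+2$, and a short pairwise check handles locating the level-$(n+1)$ vertices from one another and from the leaves of $HT(n)$. The total cardinality is $\gamma^{L}(HT(n))+6\cdot 2^{n+1}=\gamma^{L}(HT(n))+3\cdot 2^{n+2}$, which agrees with the claimed formula in every residue class modulo~$4$.

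For the lower bound, let $S$ be any LDS of $HT(n+4)$ and split it as $S=S_{B}\cup S_{T}$, with $S_{B}$ in the bottom three levels and $S_{T}$ above. Re-examining Lemma~\ref{ld_ltd_6}'s disconnection argument shows that each $HT^{*}(3)$ copy at the bottom still absorbs at least six vertices of $S$ even when its two degree-$3$ vertices are dominated from outside: removing them leaves two $HT^{*}(2)$ halves, each of which requires three vertices for location purely internally (the counting portion of Lemma~\ref{ld_ltd_3}'s proof does not rely on outside domination), giving $|S_{B}|\geq 6\cdot 2^{n+1}=3\cdot 2^{n+2}$. The top contribution is handled by contracting each bottom $HT^{*}(3)$ copy together with its level-$(n+1)$ parent to a single super-leaf, yielding a copy of $HT(n)$ on which $S_{T}$ projects to a locating-dominating set, so $|S_{T}|\geq \gamma^{L}(HT(n))$ by the induction hypothesis; summing matches the upper bound.

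The main obstacle, I expect, is this last projection step. Vertices at level $n+1$ may be dominated and located entirely from inside the bottom $HT^{*}(3)$ copies, so $S_{T}$ is not a priori an LDS of $HT(n+1)$, and producing a ``virtual'' LDS of $HT(n)$ from it without increasing cardinality requires an explicit local replacement rule (for every level-$(n+1)$ vertex in $S_{T}$, promote it to its level-$n$ parent and discard duplicates; then verify that the result still dominates and still distinguishes every pair of level-$n$ vertices, using Lemma~\ref{last_level_ld} to control the contribution from the bottom level). Once that rule is validated, the residue-class constants $+1,+2,-1,-2$ simply propagate from the four base cases, since the inductive increment $3\cdot 2^{n+2}$ preserves the class modulo~$4$.
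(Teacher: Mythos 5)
Your proposal follows essentially the same route as the paper: induction in steps of four, the Lemma~\ref{lem} decomposition of the bottom levels of $HT(n+4)$ into $2^{n+1}$ disjoint copies of $HT^{*}(3)$, and Lemma~\ref{ld_ltd_6} supplying the increment $6\cdot 2^{n+1}=3\cdot 2^{n+2}$ for both the upper and lower bounds. The only place you depart from the paper is in the lower bound's ``projection'' step, which the paper handles by simply asserting $\gamma^{L}(HT(4k+4))\geq |S_{1}|+|S_{2}|$ with $S_{1}$ a minimum locating-dominating set of $HT(4k)$ --- silently assuming exactly the point you correctly flag as delicate and propose to repair with a local replacement rule.
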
			

\begin{proof}
We prove the result by induction on $n$.\\
\noindent{\bf Case $(i)$ :  $n\equiv0~(mod~4)$}\\
Let $n=4$ and $S$ be a minimum locating-dominating set of $HT(4)$.
				Let $xu$ and $yv$ be the edges in $HT(4)$ with vertices $x,u,y,v$ in level 4 such that $a$ is the parent of $x$ and $y$, and $b$ is the parent of $u$ and $v$. See Figure \ref{xyuv}.
					By Lemma $\ref{last_level_ld}$, we need at least 8 vertices from levels 3 in $S$.
					If $\{ x, y \} \subset S$, then we need at least one vertex either from level 3 or level 2 to dominate the vertex $b$. If $\{ x, v \} \subset S$, then to locate the vertices $a$ and $u$ we need at least one vertex either from level 3 or from level 2. In either case, for four vertices in level 4, at least one vertex from level 3 or level 2 gets included in $S$. Since there are 16 vertices in level 4, at least 4 more vertices from level 3 and level 2 get included in $S$. However, to dominate the root vertex we need at least one vertex from level 1 in $S$ or the root vertex itself has to be included in $S$. Therefore  $\left| S \right| \geq 13$.
										Now we will prove the equality.
					Let $S$ be the set of all vertices in level 0 and level 2 together with four alternate vertices beginning from left to right and the another 4 alternate vertices beginning from right to left in level 4.	
					See Figure \ref{ht ld4_ltd2}(b).
					Now, each vertex of level $1$ are located by its children in level 2.
					Let $x$ and $y$ be two vertices in level 3 which belongs to $V(HT(4)) \setminus S$.
					If $x$ and $y$ has different parent in level 2, then $N(x) \cap S \neq N(y) \cap S$.
					If $x$ and $y$ has same parent in level 2, then $N(x) \cap S \neq N(y) \cap S$, since at least one child of $x$ and $y$ in level 4 are in $S$. Since $S$ contains one vertex from every edge of level 4, $S$ is a locating-dominating set of $HT(4)$.
					Therefore $\left| S \right| \leq 13$. Thus, $S$ is a minimum locating-dominating set of $HT(4)$ and hence $\gamma^{L}(HT(4))=13=(1/5)(2^{4+2}+1).$
						Assume that the result is true for $n=4k$, $k\geq1$. That is, $\gamma^{L}(HT(4k))=(1/5)(2^{4k+2}+1)$.
						Consider $HT(4k+4)$. By Lemma \ref{lem}, there are $2^{4k+1}$ vertex disjoint copies of $HT^{*}(3)$ in $HT(4k+4)$. Deletion of these subgraphs $HT^{*}(3)$ along with the vertices of $HT(4k+4)$ adjacent to vertices of these subgraphs, results in $HT(4k)$.
						Therefore by Lemma \ref{ld_ltd_6},~$\gamma^{L}(HT(4k+4)) \leq \gamma^{L}(HT(4k))+6(2^{4k+1})$ and by induction hypothesis, $\gamma^{L}(HT(4k+4)) \leq (1/5)(2^{4k+2}+1)+6(2^{4k+1})=(1/5)(2^{(4k+4)+2}+1)$.
Now, let $S$ and $S_{1}^{}$ be the minimum locating-dominating set of $HT(4k+4)$ and $HT(4k)$, respectively.
Let $S_{2}^{} \subset S$ be the vertex set which contains the vertices from the last four levels of $HT(4k+4)$.	
Similar to the argument for $n=4$, any minimum locating-dominating set contains at least $2^{4k+3}_{}$ vertices from level $n$ and at least $2^{4k+2}_{}$ vertices from level $4k+3$ and level $4k+2$.
Therefore, $\gamma^{L}(HT(4k+4)) \geq \left| S_{1}^{} \right| + \left| S_{2}^{} \right|
		 = \big[ (1/5)(2^{4k+2}+1) \big] + \big[ 2^{4k+3}_{} + 2^{4k+2}_{} \big]=(1/5)(2^{(4k+2)+4}+1)$.\\
						%
The cases when $n\equiv 1,2,3~(mod~4)$ can be dealt with similarly.
\end{proof}

\begin{figure}[h!]
\centering
\includegraphics[scale=0.45]{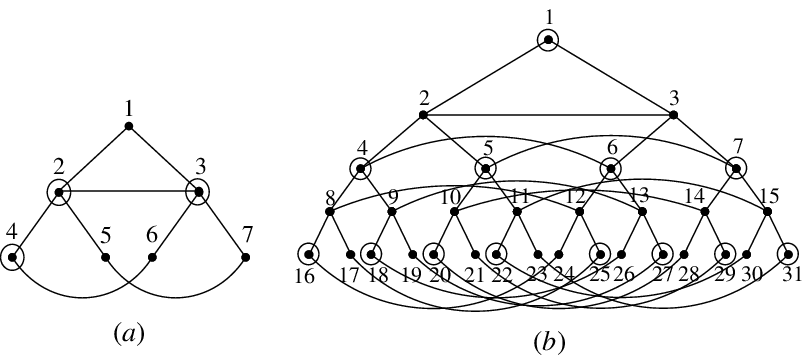}
\caption{Circled vertices constitute $(a)$ a minimum locating-total dominating set of {\it HT}($2$)~and
$(b)$~a minimum locating-dominating set of {\it HT}($4$)}
\label{ht ld4_ltd2}
\end{figure}
\begin{figure}[h!]
\centering
\includegraphics[scale=0.45]{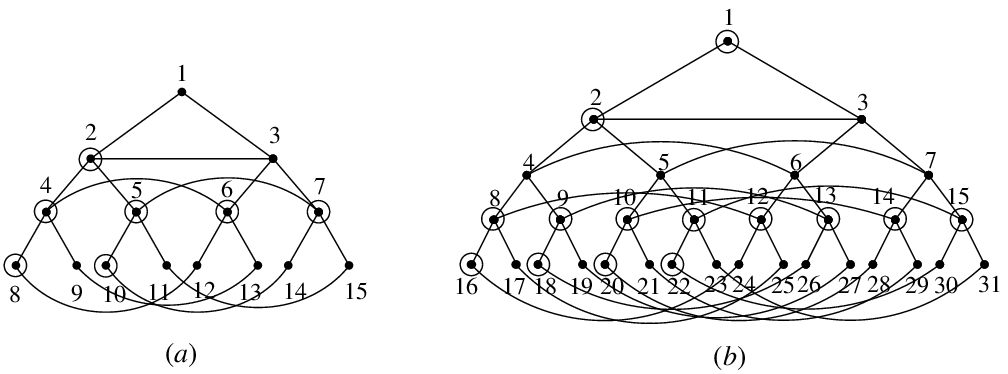}
\caption{Circled vertices constitute a minimum locating-total dominating set of $(a)$ {\it HT}($3$) and $(b)$ {\it HT}($4$)}
\label{ltd3_4}
\end{figure}

\begin{lemma}
\label{last_level_ltd}
Let $G$ be the hypertree $HT(n), n \geq 1$.
Any minimum locating-total dominating set contains at least $3(2^{n-2}_{})$ vertices from levels $n-1$ and $n$ in $G$.
\end{lemma}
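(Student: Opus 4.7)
The plan is to exploit the structure revealed by Lemma \ref{lem}: the vertices in levels $n-1$ and $n$ induce $2^{n-2}$ vertex-disjoint copies of $HT^{*}(2)$, say $H_1, H_2, \ldots, H_k$ with $k = 2^{n-2}$. It suffices to show that any minimum locating-total dominating set $S$ satisfies $|S \cap V(H_i)| \geq 3$ for every $i$; summing over all copies then yields at least $3 \cdot 2^{n-2}$ vertices of $S$ drawn from levels $n-1$ and $n$.

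Fix a copy $H$ and label its vertices exactly as in the proof of Lemma \ref{d_td}: the two level-$(n-1)$ vertices $v, v'$ (each of degree $3$ inside $H$), together with the four level-$n$ vertices $a, b, a', b'$, where $v$ is adjacent to $v', a, b$ and $v'$ is adjacent to $v, a', b'$, and the two horizontal edges at level $n$ are $a\,a'$ and $b\,b'$. The crucial structural observation is that every level-$n$ vertex has all of its $G$-neighbors inside $V(H)$. Consequently, for any vertex $u \in \{a,b,a',b'\}$, we have $N(u) \cap S = N_H(u) \cap S$, so both total domination of $u$ and the value $N(u) \cap S$ depend only on $S \cap V(H)$.

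I would then argue by contradiction: assume $|S \cap V(H)| \leq 2$. A brief case check of the $\binom{6}{2}$ possible pairs shows that the only 2-subset of $V(H)$ that is adjacent to each of $a, b, a', b'$ is $\{v, v'\}$; every other pair leaves at least one level-$n$ vertex with no neighbor in $S \cap V(H)$, and hence none in $S$, contradicting total domination. But if $S \cap V(H) = \{v, v'\}$, then
\[
N(a) \cap S \;=\; \{v, a'\} \cap S \;=\; \{v\} \;=\; \{v, b'\} \cap S \;=\; N(b) \cap S,
\]
with $a, b \in V(G) \setminus S$, violating the locating property. Either way we get a contradiction, so $|S \cap V(H)| \geq 3$, and the desired bound follows.

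The main obstacle is just the 2-subset case check: once one observes that total domination of the four level-$n$ vertices forces $S \cap V(H) = \{v, v'\}$, and that this choice is then ruled out by the locating condition on $a$ and $b$, the argument closes immediately. Note also that the bound cannot be replaced with a higher constant per copy, since Lemma \ref{ld_ltd_3} exhibits a 3-element locating-total dominating set of $HT^{*}(2)$.
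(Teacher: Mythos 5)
Your proof is correct and follows essentially the same route as the paper: decompose the vertices of levels $n-1$ and $n$ into $2^{n-2}$ disjoint induced copies of $HT^{*}(2)$ and show each copy must contribute at least $3$ vertices to $S$. In fact your per-copy argument is tighter than the paper's, which merely asserts that ``the worst case arises when both degree-$3$ vertices are already located from outside'' and appeals to the proof of Lemma \ref{ld_ltd_3} (a statement about the standalone graph); your observation that every level-$n$ vertex has all its $G$-neighbours inside its copy, so that total domination forces $S\cap V(H)=\{v,v'\}$ when $|S\cap V(H)|\le 2$ and the locating condition on $a,b$ then fails, is exactly the detail needed to make that transfer rigorous.
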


\begin{proof}
Let $S$ be a minimum locating-total dominating set of $G$. Vertices in levels $n$ and $n-1$ of $G$ induce $2^{n-2}_{}$ copies of $H$, each isomorphic to $HT^{*}_{}(2)$. The worst case arises when both vertices of degree 3 in $H$ are already located by vertices from $G \setminus H$. By proof of Lemma $\ref{ld_ltd_3}$, each copy contains at least 3 vertices of $H$. Hence $S$ contains at least $3(2^{n-2}_{})$ vertices from levels $n$ and $n-1$ in $G$.
\end{proof}

\begin{theorem}
Let $G$ be the hypertree $HT(n),n \geq 1$. Then
 $$\gamma^{L}_t(G) = \left\{
\small \begin{array}{c l}
   \frac{1}{7}(3(2^{n+1})+1) & if~n\equiv0~(mod~3)\\\\
		\frac{2}{7}(3(2^{n})+1) & if~n\equiv1~(mod~3)\\\\
		\frac{3}{7}(2^{n+1}-1) & if~n\equiv2~(mod~3)
 \end{array}\right.$$
\end{theorem}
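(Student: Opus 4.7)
The plan is to prove the formula by induction on $n$, with three parallel inductions, one for each residue class of $n$ modulo $3$. The structural backbone in every case is the decomposition (Lemma \ref{lem}): after removing levels $n-2$, $n-1$, and $n$ from $HT(n)$ one is left with $HT(n-3)$, while levels $n-1$ and $n$ themselves decompose into $2^{n-2}$ vertex-disjoint copies of $HT^{*}(2)$ attached by tree edges to the level $n-2$ vertices. Base cases $n=1,2,3$ (one per residue class) are handled by direct inspection using Figures \ref{ht ld4_ltd2}(a) and \ref{ltd3_4}; the formula evaluates to $2$, $3$, and $7$ respectively, and minimum LTDSs realizing these values are exhibited directly.

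For the inductive step I take the case $n=3k+3$ (the other two residues are identical mutatis mutandis). For the upper bound I would form $S=S'\cup S''$, where $S'$ is an optimum LTDS of $HT(3k)$ given by the induction hypothesis and $S''$ is the union of the $3$-vertex LTDS of each of the $2^{3k+1}$ copies of $HT^{*}(2)$ living at levels $3k+2$ and $3k+3$, using the explicit set $\{v,v',a\}$ from Lemma \ref{ld_ltd_3}. This gives $|S|=\gamma^{L}_{t}(HT(3k))+3\cdot 2^{3k+1}$, which simplifies (after combining the IH value $\tfrac{1}{7}(3\cdot 2^{3k+1}+1)$ with the added $3\cdot 2^{3k+1}$) to exactly $\tfrac{1}{7}(3\cdot 2^{(3k+3)+1}+1)$. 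I must then verify that $S$ is a legitimate LTDS of $HT(3k+3)$: total domination of levels $\le 3k$ comes from $S'$; each level $3k+1$ vertex is dominated by its two children in $S$ (both $v$ and $v'$ of its $HT^{*}(2)$ copy lie in $S''$); vertices inside each $HT^{*}(2)$ are totally dominated by their own sub-LTDS; and the locating condition is split according to where the two unlocated vertices sit. Pairs entirely in levels $\le 3k$ inherit location from $S'$ since the extra neighbors they acquire in $HT(3k+3)$ (their level $3k+1$ children, if any) lie outside $S$; pairs inside a single $HT^{*}(2)$ copy are located by Lemma \ref{ld_ltd_3}; pairs of level $3k+1$ vertices have disjoint, nonempty pairs of level $3k+2$ children in $S$, so their traces in $S$ differ; and cross-level pairs are distinguished by having children in $S$ versus not, or by the ``parent'' coordinate.

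For the matching lower bound I would start with an arbitrary LTDS $S$ of $HT(3k+3)$, split it as $S_{2}=S\cap(\text{levels } 3k+2,3k+3)$ and $S_{1}=S\setminus S_{2}$. Lemma \ref{last_level_ltd} immediately delivers $|S_{2}|\ge 3\cdot 2^{3k+1}$. The remainder of the argument, mirroring the Theorem \ref{dom} template, is to observe that $S_{1}$ together with the level $3k+1$ portion already forces an LTDS-like obligation on the induced $HT(3k)$-subgraph (levels $0$ through $3k$), so $|S_{1}|\ge \gamma^{L}_{t}(HT(3k))$; adding the two bounds and applying the induction hypothesis yields the desired value.

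The main obstacle I anticipate is the lower-bound bookkeeping: the naive restriction $S\cap V(HT(3k))$ of an LTDS of $HT(3k+3)$ is not automatically an LTDS of $HT(3k)$, because a level $3k$ vertex outside $S$ in $HT(3k+3)$ may be totally dominated solely by a level $3k+1$ child. One therefore has to argue that for each pendant ``block'' $HT^{*}(2)$ plus its level $3k+1$ parent, the $S_{2}$ part alone is already forced to contain $3\cdot 2^{3k+1}$ vertices by Lemma \ref{last_level_ltd}, so any savings at the interface must be compensated above, and then invoke the IH together with an explicit adjustment at level $3k$ to recover the bound $\gamma^{L}_{t}(HT(3k))$. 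Once this accounting is in place for $n\equiv 0\pmod 3$, the two remaining residue classes follow by the same three-level step, starting from the base cases $n=1$ and $n=2$.
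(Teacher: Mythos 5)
Your proposal follows essentially the same route as the paper's own proof: induction on $n$ in residue classes modulo $3$, the decomposition of Lemma \ref{lem} into $2^{n-2}$ copies of $HT^{*}(2)$, the upper bound by adjoining the $3$-vertex LTDS of Lemma \ref{ld_ltd_3} to an optimal LTDS of $HT(n-3)$, and the lower bound by splitting an arbitrary LTDS via Lemma \ref{last_level_ltd} and invoking the induction hypothesis on the remaining levels. If anything, you are more explicit than the paper about the delicate point in the lower bound (that the restriction of an LTDS to the top levels need not itself be an LTDS of $HT(n-3)$), which the paper passes over silently.
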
			
\begin{proof}
We prove the result by induction on $n$.\\
{\bf Case $(i)$:  $n\equiv0~(mod~3)$}\\
Let $n= 3$ and let $S$ be a locating-total dominating set of $HT(3)$.
By Lemma $\ref{last_level_ltd}$, we need at least 6 vertices from levels 2 and 3 in $S$. To dominate the root vertex, we need at least 1 vertex from level 1. Thus $\gamma^{L}_t(HT(3)) \geq 7 $. 			
				Let $S$ be the set of all vertices comprising of all vertices in level 2, one of the vertex in level 1 and two alternate vertices beginning from left to right in level 3. See Figure \ref{ltd3_4}(a).
				Now, each vertex of level $2$ are located by its children in level 3.
				Let $x$ and $y$ be two vertices in level 4 which belongs to $V(HT(3)) \setminus S$.
				Let $u$ and $v$ be two vertices in level 4 such that $u \in N(x)$ and $v \in N(y)$.
If $x$ and $y$ has same parent, say $w$, in level $3$ then $N(x) \cap S \neq N(y) \cap S$ since either $u \in  S$ or $v \in S$.
If $x$ and $y$ has different parent, say $a$ and $b$, respectively. In this case there are two chances either $u$ and $v$ not belonging to $S$ or any one of them belonging to $S$.
			  If $\{ u , v \} \not\subset S$, then $N(x) \cap S \neq N(y) \cap S$ since $N(x) \cap S = \{ a \}$ and $N(y) \cap S = \{ b \}$.
				If $u \in  S$ and $v \notin S$, then $N(x) \cap S \neq N(y) \cap S$ since $N(x) \cap S = \{ a,u \}$ and $N(y) \cap S = \{ b \}$. Also it is easy to see that the vertices in level 1 are located by $S$. Thus $S$ is a locating-dominating set of $HT(4)$.	Therefore $\gamma^{L}_t(HT(3)) \leq 7 = (1/7)(3(2^{3+1})+1)$.
									Assume that the result is true for $n=3k$, $k\geq1$. That is, $\gamma^{L}_t(HT(3k))=(1/7)(3(2^{3k+1})+1).$ Consider $HT(3k+3)$. By Lemma \ref{lem}, there are $2^{3k+1}$ vertex disjoint copies of $HT^{*}(2)$ in $HT(3k+3)$. Deletion of these subgraphs $HT^{*}(2)$ along with the vertices of $HT(3k+3)$ adjacent to vertices of these subgraphs results in $HT(3k)$. Therefore by Lemma \ref{ld_ltd_3},~$\gamma^{L}_t(HT(3k+3)) \leq \gamma^{L}_t(HT(3k))+3(2^{3k+1})$ and by induction hypothesis,
$\gamma^{L}_t(HT(3k+3)) \leq (1/7)(3(2^{3k+1})+1) + 6(2^{3k})=(1/7)(3(2^{(3k+3)+1})+1)$.
Now, let $S$ and $S_{1}^{}$ be the minimum locating-total dominating set of $HT(3k+3)$ and $HT(3k)$, respectively.
Let $S_{2}^{} \subset S$ be the vertex set which contains the vertices from the last three levels of $HT(3k+3)$.	
Similar to the argument for $n=3$, any minimum locating-total dominating set contains at least $3(2^{3k+1}_{})$ vertices from levels $3k+3$ and $3k+2$.
Therefore, $\gamma^{L}_t(HT(3k+3)) \geq \left| S_{1}^{} \right| + \left| S_{2}^{} \right|
		 = \big[ (1/7)(3(2^{3k+1})+1) \big] + \big[ 2^{3k+2}_{} + 2^{3k+1}_{} \big] = (1/7)(3(2^{(3k+3)+1})+1)$.\\
%
The case when $n\equiv 1~(mod~4)$ can be dealt with similarly.
For illustration, the locating-total dominating set of $HT(4)$ is given in Figure \ref{ltd3_4}(b).\\
\noindent The case when $n \equiv 2~(mod~3)$ is similar with $S=\{ 2, 3, 4 \}$ being the minimum locating-total dominating set of $HT(2)$ as the base case. See Figure \ref{ht ld4_ltd2}(a).
\end{proof}

\section{Domination in Sibling trees}

Sibling tree is obtained from the complete binary tree $T_{n}$ by adding edges (sibling edges) between left and right children of the same parent node.
Here the nodes of the sibling tree are numbered as follows: The root node has label $1$. The root is said to be at level $0$.
Here the children of the nodes $x$ are labeled as $2x$ and $2x+1$. See Figure \ref{st def}.
We denote an $n$-level sibling tree as $ST_{n}$. It has $2^{n+1}-1$ vertices and $3(2^{n}-1)$ edges.
%
For each $i, 1 \leq i \leq n$, let $V_{i}^{}$ denote the vertex set in level $i$, with $\left| V_{i}^{} \right| =2^i$. We call the edges in level $n$ as terminal edges and the vertices incident on them as terminal vertices.
The following lemma is obvious from the definition of a sibling tree.
%

%

\begin{figure}[h!]
\centering
\includegraphics[scale=0.41]{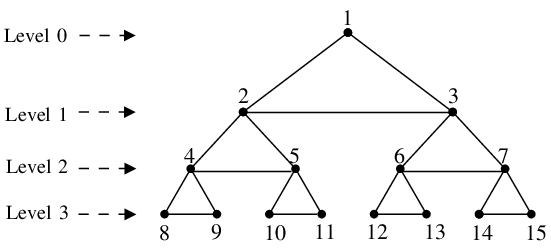}
\caption{$ST_{3}$ with decimal label}
\label{st def}
\end{figure}

\begin{lemma}\label{3}
\label{d_td_st2}
Let $G$ be the sibling tree $ST_{2}$. Then $\gamma^{}(G) =\gamma^{}_t(G) = 2$.
\end{lemma}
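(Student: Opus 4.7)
The plan is to mirror the argument used for $HT^{*}(2)$ in Lemma \ref{d_td}. First I would establish the lower bound $|S| \geq 2$ by computing the maximum degree: the sibling tree $ST_2$ has $7$ vertices, and the only vertices of degree exceeding $2$ are the two level-$1$ vertices $v=2$ and $v'=3$, each of degree $4$ (neighbors consisting of the root, the sibling at level $1$, and two children at level $2$). Hence $\Delta(G)=4$, so no single vertex is adjacent to all of the remaining $6$ vertices, ruling out $|S|=1$.

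For the upper bound, I would exhibit the candidate set $S=\{v,v'\}=\{2,3\}$. A direct verification shows $N[v]=\{1,2,3,4,5\}$ and $N[v']=\{1,2,3,6,7\}$, so $N[v]\cup N[v']=V(G)$, giving $\gamma(G)\leq 2$. Combined with the lower bound, $\gamma(G)=2$.

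To deduce the total domination statement, I would observe that $v$ and $v'$ are joined by the sibling edge of level $1$, so every vertex of $S$ itself has a neighbor in $S$. Together with the previous paragraph this shows that $S$ is a total dominating set, whence $\gamma_t(G)\leq 2$. Since $\gamma_t(G)\geq \gamma(G)=2$, equality follows.

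The proof is essentially a structural check rather than an argument with a subtle obstacle; the only thing to be careful about is identifying the correct two vertices of maximum degree (the level-$1$ siblings), since unlike $HT^{*}(2)$ the high-degree vertices arise from the combination of a parent edge, a sibling edge, and two child edges rather than from horizontal hypertree links.
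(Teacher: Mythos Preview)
Your proposal is correct and follows essentially the same approach as the paper: the lower bound via $\Delta(G)=4<6$, the upper bound via $S=V_1=\{2,3\}$, and the observation that these two siblings are adjacent to conclude total domination. The only difference is that you spell out the closed neighborhoods explicitly, whereas the paper writes $N[S]=V(G)$ more tersely.
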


\begin{proof}
Let $S$ be a dominating set of $G$.
We claim that $\left|S\right| \geq 2$.
Suppose not, let $\left|S\right| = 1$.
Then there exists a vertex $u$ in $S$ such that deg($u$) = $6$, a contradiction since $\Delta(G)=4$. Hence $\left|S\right|\geq 2$.
Let $S = V_{1}^{}$. Then $N[S] = V(G)$ and hence $\left|S\right| \leq  2$.
Since the vertices in $V_{1}^{}$ are adjacent in $G$, $S$ is also a minimum total dominating set of $G$.
Therefore $\gamma(G) =\gamma_t(G) = 2$.
\end{proof}

The proof of the following lemma is similar to that of Lemma \ref{last_level_d} and hence is omitted.\\

\begin{lemma}
\label{last_level_st_d}
Let $G$ be the sibling tree $ST_{n}, n \geq 1$.
Then any minimum dominating set of $G$ contains at least $2_{}^{n-1}$ vertices from levels $n-1$ and $n$.
\end{lemma}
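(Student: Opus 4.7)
The plan is to mirror the proof of Lemma \ref{last_level_d}, now with $ST_{2}$ playing the role of $HT^{*}(2)$. First I would observe that in $ST_{n}$ (for $n \geq 2$) the vertex set $V_{n-1} \cup V_{n}$ induces $2^{n-2}$ pairwise vertex-disjoint subgraphs $H_{1},\ldots,H_{2^{n-2}}$, each isomorphic to the graph $H$ obtained from $ST_{2}$ by deleting its root. Concretely, each $H_{i}$ consists of a sibling pair $\{p,q\}$ at level $n-1$, their four children $a,b,c,d$ at level $n$, and the edges $pq$, $pa$, $pb$, $qc$, $qd$, $ab$, $cd$. Crucially, every neighbour in $ST_{n}$ of any level-$n$ vertex of $H_{i}$ already lies inside $H_{i}$, so the four bottom vertices of $H_{i}$ can be dominated only by vertices of $H_{i}$ itself.

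Next I would argue that every dominating set $S$ of $ST_{n}$ must contain at least two vertices of each $H_{i}$. The four level-$n$ vertices split into two disjoint sibling edges $ab$ and $cd$, and a single vertex of $H_{i}$ dominates at most one of these two pairs: a vertex at level $n-1$ dominates only the pair of children directly below it, while a vertex at level $n$ dominates only itself and its sibling. Hence at least two vertices of $H_{i}$ must lie in $S$, even in the worst case in which $p$ and $q$ are already dominated from level $n-2$. This is exactly the role played by Lemma \ref{d_td} in the hypertree argument, here supplied by Lemma \ref{d_td_st2}.

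Summing over the $2^{n-2}$ copies yields $|S \cap (V_{n-1} \cup V_{n})| \geq 2 \cdot 2^{n-2} = 2^{n-1}$, which is the desired bound; the residual cases $n = 1, 2$ are immediate by direct inspection. The main point that actually requires sibling-tree-specific checking, rather than a verbatim copy of the hypertree proof, is the matching-style argument in the previous paragraph: one must verify that two internal vertices of $H_{i}$ are unavoidable whenever the top pair $\{p,q\}$ is dominated from outside, because the sibling edges at level $n$ in $ST_{n}$ connect children of the \emph{same} parent rather than, as in the hypertree case, children of different parents.
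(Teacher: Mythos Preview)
Your proposal is correct and follows exactly the route the paper intends: the paper omits the proof, stating only that it is similar to that of Lemma~\ref{last_level_d}, and your argument is precisely that adaptation, partitioning $V_{n-1}\cup V_n$ into $2^{n-2}$ root-deleted copies of $ST_2$ and invoking Lemma~\ref{d_td_st2} to force two vertices per copy. Your extra care in checking that the two sibling pairs $\{a,b\}$ and $\{c,d\}$ cannot be covered by a single vertex of $H_i$ is the one sibling-tree-specific detail, and it is handled correctly.
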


Using Lemma \ref{d_td_st2} and Lemma \ref{last_level_st_d}, we will prove the following theorem and the proof is similar to that of Theorem \ref{dom} and hence is omitted.\\

\begin{theorem}
\label{st domination}
Let $G$ be the sibling tree $ST_{n}, n \geq 1$. Then \newline
$$\gamma(G) = \left\{
\begin{array}{c l}
    \frac{1}{7}(2^{n+2}+3) & if~~n\equiv0~(mod~3)\\\\
		\frac{1}{7}(2^{n+2}-1) & if~~n\equiv1~(mod~3)\\\\
		\frac{2}{7}(2^{n+1}-1) & if~~n\equiv2~(mod~3)
 \end{array}\right.$$
\end{theorem}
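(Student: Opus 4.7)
The plan is to induct on $n$ in the three residue classes modulo $3$, mirroring the proof of Theorem \ref{dom} with $HT$ replaced by $ST$. The base cases are $n=1,2,3$: $ST_1$ is a triangle so $\gamma(ST_1)=1=(1/7)(2^3-1)$; Lemma \ref{d_td_st2} gives $\gamma(ST_2)=2=(2/7)(2^3-1)$; and for $ST_3$, Lemma \ref{last_level_st_d} forces at least $4$ vertices in levels $2$ and $3$, while at least one further vertex is required to dominate the root, so $\gamma(ST_3)\ge 5=(1/7)(2^5+3)$, and equality is realized by taking the four level-$2$ vertices together with the root.

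For the inductive step in the class $n\equiv 0\pmod 3$, assume $\gamma(ST_{3k})=(1/7)(2^{3k+2}+3)$ and consider $ST_{3k+3}$. The subgraph induced by $V_{3k+2}\cup V_{3k+3}$ decomposes into $2^{3k+1}$ vertex-disjoint copies of the $6$-vertex gadget consisting of two sibling parents together with their four children and all sibling and parent-child edges; each such gadget has maximum degree $3$ and so, by the argument of Lemma \ref{d_td_st2}, requires at least two of its own vertices in any dominating set. Combining an optimum dominating set of the top $ST_{3k}$ with two vertices in each bottom gadget gives
\[
\gamma(ST_{3k+3})\le \gamma(ST_{3k})+2\cdot 2^{3k+1}=\tfrac{1}{7}(2^{3k+5}+3).
\]
For the matching lower bound, let $S$ be a minimum dominating set of $ST_{3k+3}$. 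Lemma \ref{last_level_st_d} forces $|S\cap(V_{3k+2}\cup V_{3k+3})|\ge 2^{3k+2}$. Since vertices of the bottom two levels cannot dominate above $V_{3k+1}$, the complementary set $S'=S\cap(V_0\cup\cdots\cup V_{3k+1})$ must dominate all of $V_0\cup\cdots\cup V_{3k}$; replacing each vertex of $S'\cap V_{3k+1}$ by its parent in $V_{3k}$ yields a dominating set of $ST_{3k}$ of size at most $|S'|$, so $|S'|\ge\gamma(ST_{3k})$ and hence $|S|\ge\gamma(ST_{3k})+2^{3k+2}$, matching the upper bound. The classes $n\equiv 1,2\pmod 3$ proceed analogously with the same inductive step and base cases $ST_1$ and $ST_2$.

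The step I expect to require the most care is verifying that the bottom-two-level count from Lemma \ref{last_level_st_d} and the upper-dominator count genuinely add, rather than overlap: a vertex in $V_{3k+1}$ selected for $S$ may simultaneously dominate its children in $V_{3k+2}$ and its parent in $V_{3k}$, and one must check that the lifting argument above does not lose any dominated vertex of $ST_{3k}$. This is exactly the analogue of the worst-case clause in the proof of Lemma \ref{last_level_st_d}, and it also confirms that the additional sibling edges present in $ST_n$ (but absent in $HT(n)$) do not strengthen the lower bound, which is why the hypertree formula carries over verbatim to sibling trees.
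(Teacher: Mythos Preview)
Your proposal is correct and follows the same inductive scheme the paper intends (the paper omits the proof entirely, pointing to Theorem~\ref{dom}); in fact your lower-bound lifting argument---replacing each $v\in S'\cap V_{3k+1}$ by its parent to produce a dominating set of $ST_{3k}$---is more careful than the corresponding step in the paper's hypertree proof, which simply asserts $|S|\ge |S_1|+|S_2|$ without justifying why the top portion of $S$ must dominate all of $ST_{3k}$. One small point to make explicit in your upper-bound construction: the two vertices you pick in each bottom gadget should be the two level-$(3k+2)$ vertices $p,p'$, so that every vertex of $V_{3k+1}$ is dominated by one of its children; otherwise $V_{3k+1}$ is covered neither by the optimum set on $ST_{3k}$ nor by the gadget vertices.
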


\begin{remark}
The dominating sets described in Theorem $\ref{st domination}$ for $ST_{n}$, when $n\equiv0, 2~(mod~3)$ do not contain any isolated vertex.
Let $n = 3k+1$, $k \geq 0$. By Lemma $\ref{last_level_st_d}$, recursively every set of three levels from the bottom of $ST_{n}$ must contain a minimum number of vertices from the last two levels to dominate all the three levels. Hence no minimum dominating set of $ST_{n}^{}$ contains any vertex from level $2$.
So far, the vertices of level 0 and level 1 are not dominated. Hence any minimum dominating set contains at least one vertex either from level 0 or level 1. See Figure $\ref{st d td}(a)$.
Now, to make it as a total dominating set, any minimum total dominating set of $ST_{n}^{}$ must include one more vertex either from level $0$ or level $1$. See Figure $\ref{st d td}(b)$.
These observations yield the following result.
\end{remark}

\begin{theorem}
\label{st dom}
Let $G$ be the sibling tree $ST_{n}, n \geq 1$. Then \newline
\hspace{12cm} $$\gamma_t(G) = \left\{
\begin{array}{c l}
    \frac{1}{7}(2^{n+2}+3) & if~~n\equiv0~(mod~3)\\\\
		\frac{1}{7}(2^{n+2}-1)+1 & if~~n\equiv1~(mod~3)\\\\
		\frac{2}{7}(2^{n+1}-1) & if~~n\equiv2~(mod~3)
		\end{array}\right.$$
\label{st dom}
\end{theorem}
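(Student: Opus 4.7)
The plan is to derive this theorem from Theorem \ref{st domination} (the domination number formula for $ST_n$) together with the universal inequality $\gamma_t(G)\ge \gamma(G)$, treating the three residue classes separately and exploiting the remark that precedes the statement.

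First, I would establish the two ``easy'' cases $n\equiv 0\pmod 3$ and $n\equiv 2\pmod 3$. For the lower bound, observe that every total dominating set is a dominating set, so $\gamma_t(ST_n)\ge \gamma(ST_n)$, which by Theorem \ref{st domination} equals the claimed expression in both residue classes. For the upper bound, take the explicit minimum dominating set $S$ constructed in the proof of Theorem \ref{st domination}: in the case $n\equiv 0\pmod 3$, $S$ consists of entire levels (e.g.\ level $2$ and the root in $ST_3$) whose vertices are pairwise adjacent through parent/child relations with other members of $S$; in the case $n\equiv 2\pmod 3$, a similar layered construction produces a dominating set whose induced subgraph has no isolated vertex (either sibling edges or parent-child edges connect members of $S$). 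Hence $S$ is already a total dominating set, giving $\gamma_t(ST_n)\le |S|=\gamma(ST_n)$.

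Next, for $n\equiv 1\pmod 3$ I would prove the sharper lower bound $\gamma_t(ST_n)\ge \gamma(ST_n)+1$. Using Lemma \ref{last_level_st_d} recursively three levels at a time from the bottom, any dominating set must place at least $2^{n-1}$ vertices in levels $\{n-1,n\}$, then at least $2^{n-4}$ in levels $\{n-4,n-3\}$, and so on down to levels $\{3,4\}$. This accounts for exactly $\gamma(ST_n)-1$ vertices and forces level $2$ to be disjoint from any minimum dominating set. Since levels $0$ and $1$ still need to be dominated, at least one vertex must lie in $\{1,2,3\}$ (labels of levels $0,1$); call this vertex $w$. For $S$ to be total dominating, $w$ needs a neighbor in $S$, but all of $w$'s neighbors lie in levels $\{0,1,2\}$, all excluded or represented only by $w$ itself in a minimum configuration. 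Hence a second vertex from $\{1,2,3\}$ must be added, giving $|S|\ge \gamma(ST_n)+1 = \tfrac{1}{7}(2^{n+2}-1)+1$.

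Finally, for the matching upper bound in the case $n\equiv 1\pmod 3$, I would augment the explicit minimum dominating set from Theorem \ref{st domination} by one vertex: include both vertices $2$ and $3$ at level $1$ (or the root and one of its children). Since $2$ and $3$ are joined by a sibling edge, neither is isolated in the induced subgraph; the rest of the augmented set inherits its non-isolated structure from the three-level recursive pattern, so every vertex of the set has a neighbor in the set. Hence the augmented set is a locating-total dominating set of the required size, completing the proof. The main obstacle is the delicate lower-bound argument in the $n\equiv 1\pmod 3$ case: one has to argue carefully that the recursive packing from Lemma \ref{last_level_st_d} is tight enough to exclude level $2$ from every minimum dominating set, thereby forcing the extra vertex required for totality.
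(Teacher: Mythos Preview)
Your proposal is correct and follows essentially the same route as the paper: the paper's proof of this theorem is contained entirely in the preceding Remark, which observes (i) that the minimum dominating sets built in Theorem~\ref{st domination} already have no isolated vertices when $n\equiv 0,2\pmod 3$, and (ii) that when $n\equiv 1\pmod 3$ the recursive application of Lemma~\ref{last_level_st_d} forces level~$2$ to be empty in any minimum dominating set, so one extra vertex from level~$0$ or~$1$ is needed for totality---exactly your argument. One small slip: in your final paragraph you write ``locating-total dominating set'' where you mean ``total dominating set.''
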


\begin{figure}[h!]
\centering
\includegraphics[scale=0.55]{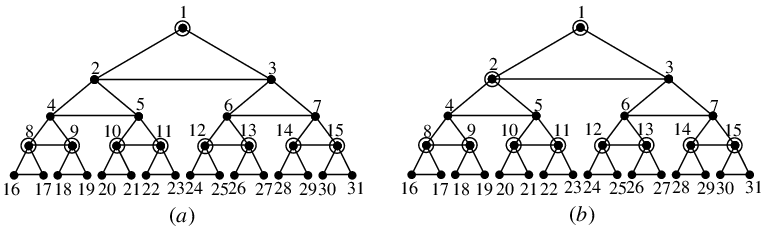}
\caption{Circled vertices constitute $(a)$ a minimum dominating set of ${\it ST}_{4}$~and
$(b)$~a minimum total dominating set of ${\it ST}_{4}$}
\label{st d td}
\end{figure}

\begin{lemma}\label{3}
\label{st_ld}
Let $G$ be the sibling tree $ST_{3}$. Then $\gamma^{L}(G) = 6$.

\end{lemma}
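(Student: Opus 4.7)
The plan is to establish matching bounds $\gamma^L(ST_3) \leq 6$ and $\gamma^L(ST_3) \geq 6$.

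For the upper bound, I would exhibit the explicit set $S = \{2, 3, 8, 10, 12, 14\}$ consisting of both vertices of level $1$ together with one vertex from each of the four sibling pairs in level $3$. Domination is immediate because every vertex in levels $0$ and $2$ has a neighbour in $\{2, 3\} \subseteq S$ and every unselected vertex in level $3$ is adjacent to its sibling in $S$. For the locating property a direct check suffices: $N(1) \cap S = \{2,3\}$; the four level-$2$ vertices give $N(4)\cap S = \{2,8\}$, $N(5)\cap S = \{2,10\}$, $N(6)\cap S = \{3,12\}$, $N(7)\cap S = \{3,14\}$; and the four unselected level-$3$ siblings each have the singleton neighbourhood $\{8\}, \{10\}, \{12\}, \{14\}$ in $S$. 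These nine sets are pairwise distinct, so $S$ is a valid LDS.

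For the lower bound, write $s_i$ for the number of vertices of $S$ in level $i$. The first step is the sibling-tree analog of Lemma~\ref{last_level_ld}: the four sibling edges in level $3$ form a perfect matching, and if $S$ misses both endpoints $x, y$ of such an edge then $x$ and $y$ share only their common parent in level $2$ as an external neighbour, forcing $N(x)\cap S = N(y)\cap S$. Hence $s_3 \geq 4$. The second step is to show $s_0 + s_1 + s_2 \geq 2$. Dominating the root requires $s_0 + s_1 \geq 1$, and exactly one vertex in levels $0$--$2$ can be ruled out by three subcases: if the chosen vertex is the root then $N(2) \cap S = N(3) \cap S = \{1\}$; if it is a single level-$1$ vertex, say $2$, then $N(1) \cap S = N(3) \cap S = \{2\}$ because no level-$2$ vertex lies in $S$; and a single level-$2$ vertex leaves the root undominated. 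Combining, $|S| \geq 4 + 2 = 6$.

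The main obstacle is confirming that no placement with $s_3 \geq 5$ and $s_0 + s_1 + s_2 \leq 1$ can give a valid LDS. This follows from the very same subcase analysis once one observes that level-$3$ vertices are non-adjacent to vertices of levels $0$ and $1$, so inflating $s_3$ does nothing to help distinguish the vertices $1$, $2$, $3$ among themselves. Once this is noted, the two bounds close immediately and the lemma follows.
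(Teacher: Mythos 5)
Your proof is correct and the lower-bound argument is essentially the paper's: one vertex forced from each terminal (sibling) edge of level $3$ gives $4$, and a case analysis on the lone non-terminal vertex (root, level $1$, or level $2$) rules out $|S|=5$. You additionally exhibit and verify the explicit set $\{2,3,8,10,12,14\}$ for the upper bound, which the paper's own proof of this lemma omits.
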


\begin{proof}
Let $S$ be a locating-dominating set of $G$.
We claim $\left|S\right| \geq 6$.
From each terminal edge $uv$, at least one vertex of it should belongs to $S$,
otherwise $N(u)\cap S = N(v)\cap S$, a contradiction. Thus $\left| S \right| \geq 4$.
The terminal vertices do not dominate the vertices in $V_{0}$ and $V_{1}.$ Hence $\left| S \right| \geq 5$.
Suppose $\left| S \right| = 5$. Let $w$ be the non terminal vertex in $S$. If $w$ is in $V_{2}$, then the root vertex is not dominated. On the other hand, if $w$ is in level 0 or 1, then the other two vertices $x$ and $y$ in $V_{0} \cup V_{1}$ are such that $N(x)\cap S = N(y)\cap S$, a contradiction.
Hence  $\left| S \right| \geq 6$.
\end{proof}

The proof of the following lemma is similar to that of Lemma \ref{last_level_ld} and hence is omitted.\\

\begin{lemma}
\label{last_level_st_ld}
Let $G$ be the sibling tree $ST_{n}, n \geq 1$.
Then any minimum locating-dominating set of $G$ contains at least $2_{}^{n-1}$ vertices of $G$ from level $n$.
\end{lemma}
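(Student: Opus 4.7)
The plan is to mimic the argument of Lemma \ref{last_level_ld}, exploiting the fact that in $ST_n$ the sibling edges turn level $n$ into a perfect matching, so that the two endpoints of any such edge are indistinguishable from each other unless $S$ touches the pair.

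First I would identify the relevant substructure. The $2^n$ vertices of level $n$ are paired by the sibling edges into exactly $2^{n-1}$ edges; I will call these pairs $\{x_i,y_i\}$ for $i=1,\dots,2^{n-1}$, and denote by $p_i$ the common parent of $x_i$ and $y_i$ in level $n-1$. In $ST_n$ the only neighbors of $x_i$ are $y_i$ and $p_i$, and similarly the only neighbors of $y_i$ are $x_i$ and $p_i$.

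Next I would carry out the core contradiction. Let $S$ be any locating-dominating set of $G=ST_n$, and fix a sibling pair $\{x_i,y_i\}$. Suppose for contradiction that $\{x_i,y_i\}\cap S=\emptyset$. Since the neighborhoods of $x_i$ and $y_i$ consist only of each other and of $p_i$, we get
\[
N(x_i)\cap S \;=\; \{p_i\}\cap S \;=\; N(y_i)\cap S,
\]
but both $x_i$ and $y_i$ lie in $V(G)\setminus S$, which contradicts the defining property of an LDS. Hence $|\{x_i,y_i\}\cap S|\ge 1$ for every $i$.

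To finish, I would sum this lower bound across the $2^{n-1}$ sibling pairs. Since the pairs are pairwise disjoint, the counts add, giving at least $2^{n-1}$ vertices of $S$ in level $n$, as claimed. I do not expect any real obstacle here, because the sibling edges produce the matching structure for free; the only point worth checking carefully is that $x_i$ and $y_i$ truly have no other neighbors (which follows directly from the definition of $ST_n$), so that $N(x_i)\cap S$ and $N(y_i)\cap S$ reduce to the same one-element-or-empty set.
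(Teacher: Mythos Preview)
Your proposal is correct and follows essentially the same approach as the paper: the paper omits the proof, pointing to Lemma~\ref{last_level_ld}, whose argument is precisely the perfect-matching-on-level-$n$ observation you reproduce. The only cosmetic difference is that you spell out $N(x_i)\cap S=\{p_i\}\cap S=N(y_i)\cap S$ explicitly, whereas the paper merely asserts $N(x)\cap S=N(y)\cap S$.
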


Using Lemma \ref{st_ld} and Lemma \ref{last_level_st_ld}, we will prove the following theorem and the proof is similar to that of Theorem \ref{ht ld} and hence is omitted.\\


\begin{theorem}
Let $G$ be the sibling tree $ST_{n},n \geq 1$. Then
 $$\gamma^{L}(G) = \left\{
\begin{array}{c l}
    \frac{1}{5}(2^{n+2}+1) & if~~n\equiv0~(mod~4)\\\\
		\frac{1}{5}(2^{n+2}+2) & if~~n\equiv1~(mod~4)\\\\
		\frac{1}{5}(2^{n+2}-1) & if~~n\equiv2~(mod~4)\\\\
		\frac{1}{5}(2^{n+2}-2) & if~~n\equiv3~(mod~4)
 \end{array}\right.$$
\end{theorem}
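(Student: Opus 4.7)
The plan is to mirror the strategy of Theorem~\ref{ht ld}, proceeding by strong induction on $n$ with stride~$4$ (so four base cases, one per residue class modulo~$4$). The key structural fact, analogous to Lemma~\ref{lem} for hypertrees, is that $ST_{n+4}$ decomposes into an upper sub-$ST_n$ on the top $n+1$ levels together with $2^{n+1}$ pairwise vertex-disjoint subtrees rooted at the vertices of level $n+1$, each such subtree being isomorphic to $ST_3$. I would exploit this decomposition together with Lemma~\ref{st_ld} (giving $\gamma^{L}(ST_3)=6$) and Lemma~\ref{last_level_st_ld} (bounding the contribution from the deepest level).

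For the base cases, note $ST_1\cong K_3$ so $\gamma^{L}(ST_1)=2=\tfrac{1}{5}(2^3+2)$; for $n=2$, each of the two sibling pairs at level~$2$ must contribute a vertex to $S$ (siblings not in $S$ share an identical $S$-neighborhood), and one more vertex is needed to dominate the root, giving $\gamma^{L}(ST_2)=3$; the case $n=3$ is exactly Lemma~\ref{st_ld}; and for $n=4$ I argue as in the $HT(4)$ analysis of Theorem~\ref{ht ld}: Lemma~\ref{last_level_st_ld} forces $\geq 2^{3}$ vertices in level~$4$, a per-quartet analysis on each group of four level-$4$ vertices sharing a level-$2$ grandparent forces one further vertex in level~$2$ or $3$, and one further vertex is needed to handle levels~$0$ and~$1$, for the required total of $13$.

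For the inductive step I first prove the upper bound by extending a minimum LDS $S_1$ of the upper $ST_n$ by adjoining, inside each of the $2^{n+1}$ disjoint $ST_3$ subtrees, a fixed $6$-element LDS (from Lemma~\ref{st_ld}) chosen to include the root of that $ST_3$ copy; the resulting set has size $\gamma^{L}(ST_n)+6\cdot 2^{n+1}$, which a direct computation reduces to the claimed closed form in each residue class. For the lower bound, I take a minimum LDS $S$ of $ST_{n+4}$, let $S_2 = S\cap(V_{n+1}\cup V_{n+2}\cup V_{n+3}\cup V_{n+4})$, and combine Lemma~\ref{last_level_st_ld} with the same quartet bound used for $n=4$ to obtain $|S_2|\geq 3\cdot 2^{n+2}=6\cdot 2^{n+1}$; since $S\setminus S_2$ must dominate and locate the upper sub-$ST_n$, we have $|S\setminus S_2|\geq \gamma^{L}(ST_n)$ by the induction hypothesis, yielding the matching bound.

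The main obstacle I expect is the quartet lower bound: rigorously showing that any LDS must place at least one vertex of $V_{n+2}\cup V_{n+3}$ per quartet of level-$(n+4)$ vertices sharing a grandparent, without double-counting across adjacent quartets. A secondary subtlety lies in the upper-bound construction, where one must check that the level-$n$ and level-$(n+1)$ boundary vertices remain correctly located after pasting in the $ST_3$ LDSs; choosing each copy's LDS to contain its own root is what makes the extra $S$-neighborhoods of $V_n$-vertices symmetric enough to preserve the locating property inherited from $S_1$.
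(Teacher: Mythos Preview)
Your proposal is essentially the same as the paper's intended argument: the paper omits the proof entirely, stating only that it follows by using Lemma~\ref{st_ld} and Lemma~\ref{last_level_st_ld} in the manner of Theorem~\ref{ht ld}, which is precisely the stride-$4$ induction with the $ST_3$ decomposition and per-quartet lower bound that you describe. The obstacles you flag (the quartet counting for the lower bound and the boundary-locating check for the upper bound) are exactly the points left implicit in the paper's analogous proof of Theorem~\ref{ht ld}.
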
	

\begin{figure}[h!]
\centering
\includegraphics[scale=0.55]{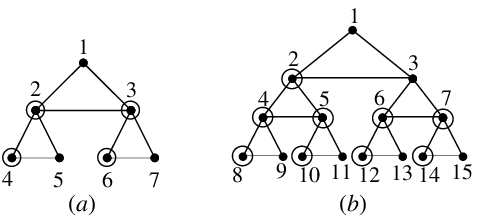}
\caption{Circled vertices constitute $(a)$ a minimum locating-total dominating set of ${\it ST}_{2}$ and
$(b)$~a minimum locating-total dominating set of ${\it ST}_{3}$}
\label{st ltd ld}
\end{figure}

\begin{lemma}\label{3}
\label{ltd_st2}
Let $G$ be the sibling tree $ST_{2}$. Then $\gamma^{L}_{t}(G) = 4$.
\end{lemma}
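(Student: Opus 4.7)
The plan is to sandwich $\gamma^{L}_{t}(ST_{2})$ between $4$ and $4$. For the upper bound I will display an explicit four-vertex locating-total dominating set, for instance $S = \{2, 3, 4, 6\}$ (the set of circled vertices in Figure~\ref{st ltd ld}(a)), and verify the two defining properties directly. Total domination is clear because the induced subgraph $G[S]$ contains the path $4$--$2$--$3$--$6$ and hence has no isolated vertex. For the locating condition I need only check the three vertices $1, 5, 7$ that lie outside $S$: their trace neighborhoods $N(1)\cap S = \{2,3\}$, $N(5)\cap S = \{2,4\}$, $N(7)\cap S = \{3,6\}$ are pairwise distinct, so $S$ locates every pair of outside vertices.

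The content of the lemma is therefore in the lower bound. Let $S$ be any locating-total dominating set of $ST_{2}$. Two easy observations together yield $|S| \ge 3$. First, from each terminal sibling pair $\{4,5\}$ and $\{6,7\}$ at least one vertex must lie in $S$, since otherwise the two omitted vertices share the same trace neighborhood ($\{2\}\cap S$ or $\{3\}\cap S$), contradicting the locating property. Second, the root $1$ has neighborhood $\{2,3\}$ in $G$ and must be dominated, so $S \cap \{1, 2, 3\} \ne \emptyset$.

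To rule out $|S| = 3$, I would split on which single vertex of $\{1, 2, 3\}$ lies in $S$, the other two members of $S$ being one from each terminal pair. If $1 \in S$, then neither of its only two neighbors $2$ and $3$ lies in $S$, so $1$ fails to be totally dominated. If $2 \in S$ (the case $3 \in S$ being symmetric), let $b \in \{6, 7\}$ be the terminal vertex of $S$ lying below $3$. Its neighbors in $ST_{2}$ are $3$ and its sibling, but $3 \notin S$ and the sibling is precisely the omitted member of that terminal pair, so $b$ has no neighbor in $S$, again contradicting total domination. Every case fails, so $|S| \ge 4$, and the exhibited four-vertex set forces equality.

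The main obstacle, minor as it is, will be to state the lower-bound case analysis cleanly: the crux is recognising that in any three-element candidate $\{v, a, b\}$ with $v \in \{1, 2, 3\}$ and $a, b$ drawn from the two terminal pairs, the single non-terminal vertex cannot simultaneously totally dominate itself and both chosen terminal vertices, because the two sibling edges in level $2$ sit on disjoint neighborhoods of $\{2\}$ and $\{3\}$.
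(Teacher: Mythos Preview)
Your proof is correct. The upper bound via $S=\{2,3,4,6\}$ matches the paper's choice $S=V_1\cup\{u,v\}$, and your lower-bound case analysis is sound.

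The paper reaches $|S|\ge 4$ more directly, without passing through $|S|\ge 3$ or invoking domination of the root. Having fixed $u\in S\cap\{4,5\}$ and $v\in S\cap\{6,7\}$ by the locating argument, it simply observes that $u$ and $v$ are non-adjacent and have disjoint neighbourhoods ($N(u)\subseteq\{2,4,5\}$, $N(v)\subseteq\{3,6,7\}$), so total domination forces two further distinct vertices $x\in N(u)\cap S$ and $y\in N(v)\cap S$. This yields four distinct elements $u,v,x,y$ in $S$ immediately. Your route gets to the same place but splits into the three sub-cases $1\in S$, $2\in S$, $3\in S$; the paper's argument packages all of these at once by noticing that whichever non-terminal vertex you add, it can be a neighbour of at most one of $u,v$.
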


\begin{proof}
Let $S$ be a locating-total dominating set of $G$.
We see that at least one vertex from each terminal edge should belong to $S$ in order to locate the vertices in level $n$ distinctly.
Let $u$ and $v$ be the selected vertices , one each from the two terminal edges.
Since $u$ and $v$ are isolated vertices, to obtain a total domination, we need at least two vertices $x$ and $y$ such that $\{ ux, vy \} \in E(G)$.
Hence $\gamma_{t}(ST_{2}) \geq 4$.
Let $S = V_{1} \cup \{u, v\}$. Then clearly $S$ is a locating-total dominating set of $G$. See Figure \ref{st ltd ld}(a).
Therefore $\gamma^{L}_{t}(G) = 4$.
\end{proof}

\begin{lemma}
\label{last_level_sbt_ltd}
Let $G$ be the sibling tree $ST_{n}, n \geq 1$.
Any minimum locating-total dominating set of $G$ contains at least $2^{n}_{}$ vertices from levels $n-1$ and $n$.
\end{lemma}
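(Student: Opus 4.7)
The plan is to partition $V_{n-1} \cup V_n$ into $2^{n-2}$ vertex-disjoint six-vertex units and to show that any locating-total dominating set $S$ of $G$ contains at least $4$ vertices from each unit. Summing contributions then gives $|S \cap (V_{n-1} \cup V_n)| \geq 4 \cdot 2^{n-2} = 2^n$, as required.

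Each unit $H$ arises from a single grandparent $q$ at level $n-2$: take its two children $p, p'$ at level $n-1$ (a sibling pair) together with the four children $u, v$ of $p$ and $u', v'$ of $p'$ at level $n$. The edges induced within $V(H)$ are $pp'$, $pu$, $pv$, $uv$, $p'u'$, $p'v'$, and $u'v'$, so $H$ consists of two triangles $puv$ and $p'u'v'$ joined by the edge $pp'$. These $2^{n-2}$ units clearly partition $V_{n-1} \cup V_n$. For the per-unit bound, I would analyse each terminal edge in isolation. Take the edge $uv$ with parent $p$. The only neighbours of $u$ in $G$ are $v$ and $p$, and likewise for $v$, so if neither $u$ nor $v$ lies in $S$ then $N(u) \cap S = N(v) \cap S$, violating the location property; hence at least one of $u, v$ is in $S$. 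If exactly one is, say $u$, then total domination of $u$ requires a neighbour of $u$ in $S$; the only candidates are $v$ and $p$, and since $v \notin S$ this forces $p \in S$. If instead both $u, v \in S$, they totally dominate each other. In every case $|\{u, v, p\} \cap S| \geq 2$. Applying the same analysis to the triple $\{u', v', p'\}$ and using the disjointness of the two triples inside $V(H)$ yields $|V(H) \cap S| \geq 4$.

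The boundary case $n = 1$ has to be handled separately, but is immediate: $ST_1$ has only three vertices and any total dominating set contains at least two of them, already meeting the bound $2^1 = 2$. The only delicate step is the terminal-edge case analysis; once it is established that choosing only one endpoint of $uv$ into $S$ drags the parent $p$ into $S$ as well, the unit partition and the additive summation $4 \cdot 2^{n-2} = 2^n$ are routine.
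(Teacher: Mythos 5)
Your proof is correct and takes essentially the same approach as the paper: the paper decomposes the bottom two levels into the $2^{n-1}$ triangles formed by each level-$(n-1)$ vertex and its two children and argues each triangle meets $S$ in at least $2$ vertices, which is exactly your per-terminal-edge analysis (your grouping of two triangles per grandparent into six-vertex units is an immaterial repackaging). Your write-up actually supplies the justification the paper only asserts, and avoids a counting slip in the paper, which speaks of $2^{n}$ copies of $K_{3}$ where there are only $2^{n-1}$.
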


\begin{proof}
Let $S$ be a minimum locating-total dominating set of $G$. The vertices in levels $n$ and $n-1$ induce a subgraph $H$ consisting of $2^{n}_{}$ copies of complete graph $K_{3}^{}$. The worst case arises when the vertices in level $n-1$ are already located by vertices in $G \setminus H$. However, every $K_{3}^{}$ should contain 2 vertices of $S$.
\end{proof}

\begin{theorem}
Let $G$ be the sibling tree $ST_{n},n \geq 1$. Then
 $$\gamma^{L}_{t}(G) = \left\{
\begin{array}{c l}
    \frac{1}{7}(2^{n+3}-1) & if~~n\equiv0~(mod~3)\\\\
		\frac{1}{7}(2^{n+3}-2) & if~~n\equiv1~(mod~3)\\\\
		\frac{1}{7}(2^{n+3}-4) & if~~n\equiv2~(mod~3)
 \end{array}\right.$$
\end{theorem}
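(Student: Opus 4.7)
The approach is induction on $n$ with three residue classes modulo $3$, paralleling Theorem \ref{dom} and the locating-total domination theorem for hypertrees earlier in the paper. The base cases are $n = 2$ (from Lemma \ref{ltd_st2}, $\gamma^L_t(ST_2) = 4 = \tfrac{1}{7}(2^5-4)$), $n = 3$ (formula value $9$, with an explicit LTDS as in Figure \ref{st ltd ld}(b)), and $n = 4$ (formula value $18$, handled by an explicit construction). For each base case, the upper bound is furnished by a concrete LTDS and the lower bound follows from Lemma \ref{last_level_sbt_ltd} combined with an ad-hoc argument ensuring the root and upper levels are dominated and located. A direct arithmetic check verifies that $\gamma^L_t(ST_{n+3}) - \gamma^L_t(ST_n) = 2^{n+3}$ in every residue class, so the induction step must produce exactly this increment.

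For the inductive step upper bound, consider $ST_{n+3}$: its top $n+1$ levels form a copy of $ST_n$, so let $S_0$ be a minimum LTDS of this copy (of size $\gamma^L_t(ST_n)$ by the induction hypothesis). Extend $S_0$ by adding, for each vertex $w \in V_{n+2}$, both $w$ itself and exactly one of its two children in $V_{n+3}$. This contributes $2\cdot 2^{n+2} = 2^{n+3}$ new vertices, so the resulting set has the claimed cardinality. To see that it is a LTDS: every $u \in V_{n+1}$ is dominated by and uniquely located by its two children in $V_{n+2} \cap S$ (the children of distinct parents being distinct); since $V_{n+1} \cap S = \emptyset$, for any $v$ in the top $n+1$ levels $N(v) \cap S$ coincides with $N(v) \cap S_0$ computed inside $ST_n$, so total domination and location there are inherited from $S_0$; each $c \in V_{n+3} \setminus S$ is uniquely identified by the pair (parent in $V_{n+2}$, chosen sibling in $V_{n+3}$); and total domination of added vertices is immediate from the pairing of $w$ with its chosen child.

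For the lower bound, let $S$ be a minimum LTDS of $ST_{n+3}$. Lemma \ref{last_level_sbt_ltd} applied to $ST_{n+3}$ yields $\lvert S \cap (V_{n+2} \cup V_{n+3})\rvert \geq 2^{n+3}$. It remains to show that $\lvert S \cap (V_0 \cup \cdots \cup V_{n+1})\rvert \geq \gamma^L_t(ST_n)$. The plan is a swap argument: any vertex of $S \cap V_{n+1}$ can be replaced by its parent in $V_n$ without changing $\lvert S\rvert$ and without destroying either total domination or location, since the remaining level-$(n+1)$ vertices are already located by their distinct pairs of children in $V_{n+2} \cap S$. After all such swaps, the portion of $S$ in the top $n+1$ levels is a genuine LTDS of $ST_n$, and the induction hypothesis supplies the required estimate.

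The main obstacle is the lower bound: a vertex in $S \cap V_{n+1}$ can simultaneously dominate a vertex of $V_n$, locate a sibling in $V_{n+1}$, and help total-dominate another vertex of $S$, so the mere restriction of $S$ to the top levels is not a priori a LTDS of $ST_n$. The swap must therefore be executed carefully, exploiting the saturation of the bottom two levels forced by Lemma \ref{last_level_sbt_ltd} so that none of the three conditions is violated at intermediate steps. The cases $n \equiv 1 \pmod 3$ and $n \equiv 2 \pmod 3$ follow the same induction pattern, starting from the base cases $n = 4$ and $n = 2$ respectively.
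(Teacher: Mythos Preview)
Your proposal follows essentially the same strategy as the paper: induction on $n$ in residue classes modulo $3$, with the upper bound obtained by taking a minimum LTDS of the top copy of $ST_n$ and appending a fixed pattern to the bottom three levels, and the lower bound obtained by combining Lemma~\ref{last_level_sbt_ltd} with the induction hypothesis. Your explicit construction (all of $V_{n+2}$ together with one child of each $w\in V_{n+2}$) is exactly what the paper builds when it glues in $2^{3k+1}$ copies of the LTDS of $ST_2$ from Lemma~\ref{ltd_st2}, just described vertex-by-vertex rather than block-by-block.

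The only substantive difference is in how the inductive lower bound is argued. The paper partitions $S$ into $S\cap(V_0\cup\cdots\cup V_{3k})$ and $S_2=S\cap(V_{3k+1}\cup V_{3k+2}\cup V_{3k+3})$ and simply asserts $\gamma^L_t(ST_{3k+3})\ge |S_1|+|S_2|$, tacitly treating the top part of $S$ as an LTDS of $ST_{3k}$ without justification. You instead place level $n+1$ with the top part and propose a swap (pushing $S\cap V_{n+1}$ up to parents in $V_n$) to manufacture a genuine LTDS of $ST_n$; this is a more honest attempt to close the gap the paper leaves open. Be aware that your swap is not cost-free as stated: collisions (two siblings in $S\cap V_{n+1}$, or a child whose parent is already in $S$) shrink the set, and total domination of a vertex $s\in S\cap V_n$ that was previously witnessed only by a child in $V_{n+1}$ may be lost. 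So the careful execution you allude to really is needed, but this is refinement beyond, not divergence from, the paper's argument.
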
	

\begin{proof}
We prove the result by induction on $n$.\\
{\bf Case $(i)$:  $n\equiv0~(mod~3)$}\\
Let $n= 3$ and let $S$ be a locating-total dominating set of $ST_{3}$.
By Lemma \ref{last_level_sbt_ltd}, we need 8 vertices from levels 3 and 2 in $S$. To dominate the root vertex, we need at least 1 vertex from level 1. Thus $\left| S \right| \geq 9 $. 	
Now we will prove the equality.		
				Let $S$ be the set of all vertices comprising of all vertices in level 2 and the alternate vertices beginning from left to right in level 3. See Figure $\ref{st ltd ld}(b)$.
				Let $x$ and $y$ be two vertices in level 3 which belongs to $V(HT(3)) \setminus S$.
If $x$ and $y$ has same parent, then $N(x) \cap S \neq N(y) \cap S$ since either $x \in  S$ or $y \in S$.
If $x$ and $y$ has different parent, then $N(x) \cap S \neq N(y) \cap S$ since one vertex from every edge in level 3
belongs to $S$. Also it is easy to see that the vertices in level 0 and level 1 are located by $S$. Thus $S$ is a locating-dominating set of $HT(4)$.	Therefore $\left| S \right|  \leq 9 = (2^{3+3}-1)/7$.
Assume that the result is true for $n=3k$, $k\geq1$.
Consider $ST_{3k+3}$.
Deletion of vertices in levels $3k+1, 3k+2$ and $3k+3$ in $ST_{3k+3}$ yields $ST_{3k}$.
By induction hypothesis, $\gamma^{L}_{t}(ST_{3k})= (1/7)(2^{3k+3}-1)$. There are $2^{3k+1}$ vertex disjoint copies of $ST_{2}$ in the subgraph induced by vertices in the levels $3k+1, 3k+2$ and $3k+3$ of $ST_{3k+3}.$
Therefore by Lemma \ref{ltd_st2}, $\gamma^{L}_{t}(ST_{3k+3}) \leq (1/7)(2^{3k+3}-1)+4(2^{3k+1})=(1/7)(2^{(3k+3)+3}-1)$.
Now, let $S$ and $S_{1}^{}$ be the minimum locating-total dominating set of $ST_{3k+3}$ and $ST_{3k}$, respectively.
Let $S_{2}^{} \subset S$ be the vertex set which contains the vertices from the last three levels of $ST_{3k+3}$. Similar to the argument for $n=3$, any minimum locating-total dominating set contains at least $2^{3k+3}_{}$ vertices from levels $3k+3$ and $3k+2$.
Therefore $\gamma^{L}_{t}(ST_{3k+3}) \geq \left| S_{1}^{} \right| + \left| S_{2}^{} \right|
		 = \big[ (1/7)(2^{3k+3}-1) \big] + \big[ 2^{3k+3}_{} \big] = (1/7)(2^{(3k+3)+3}-1)$.\\
The case when $n\equiv1,2~(mod~3)$ can be dealt with similarly.
\end{proof}

\section{Conclusion}
In this paper, we have proved that $\gamma(G)=\gamma_t(G)$  when $G$ is a hypertree $HT(n)$, $n\equiv0,2~(mod~3)$ and $\gamma(G)=\gamma_t(G)-1$ when $G$ is $HT(n), n\equiv1~(mod~3)$. We have also computed $\gamma^{L}(HT(n))$ and $\gamma^{L}_t(HT(n))$, $n \geq1$. We have obtained similar results for sibling tree $ST_{n}, n\geq1$.
Finding classes of graphs $G$ with $\gamma(G)=\gamma_t(G)=\gamma^{L}(G)=\gamma^{L}_t(G)$ is under investigation.
%

%

\end{document}